\newcommand{\into}{\hookrightarrow}
\theoremstyle{plain}
\newtheorem{Theorem}{Theorem}
\newtheorem{Proposition}[Theorem]{Proposition}
\newtheorem{Corollary}[Theorem]{Corollary}
\newtheorem{Lemma}[Theorem]{Lemma}
\theoremstyle{definition}
\newtheorem{Definition}[Theorem]{Definition}
\newtheorem{Example}[Theorem]{Example}
\theoremstyle{remark}
\numberwithin{Theorem}{section} 
\DeclareMathOperator{\Hom}{Hom}
\DeclareMathOperator{\Tor}{Tor}
\DeclareMathOperator{\Obj}{Obj}
\DeclareMathOperator{\im}{im}
\DeclareMathOperator{\rk}{rk}
\newcommand{\II}{\mathbb{I}}
\newcommand{\KK}{\mathbb{K}}
\newcommand{\NN}{\mathbb{N}}
\newcommand{\ZZ}{\mathbb{Z}}
\newcommand{\cA}{\mathcal{A}}
\newcommand{\cB}{\mathcal{B}}
\newcommand{\cC}{\mathcal{C}}
\newcommand{\cD}{\mathcal{D}}
\newcommand{\cE}{\mathcal{E}}
\newcommand{\cF}{\mathcal{F}}
\newcommand{\cG}{\mathcal{G}}
\newcommand{\cH}{\mathcal{H}}
\newcommand{\cI}{\mathcal{I}}
\newcommand{\cJ}{\mathcal{J}}
\newcommand{\cK}{\mathcal{K}}
\newcommand{\cM}{\mathcal{M}}
\newcommand{\cN}{\mathcal{N}}
\newcommand{\cP}{\mathcal{P}}
\newcommand{\cR}{\mathcal{R}}
\newcommand{\cS}{\mathcal{S}}
\newcommand{\cT}{\mathcal{T}}
\newcommand{\cV}{\mathcal{V}}
\newcommand{\Id}{\operatorname{\it Id}}
\newcommand{\R}{K[x_1, \ldots, x_n]}
\newcommand{\W}{\Omega}
\newcommand{\WV}{\Omega_V}
\newcommand{\WVp}{\Omega_{V'}}
\newcommand{\WVD}{\Omega_{V,d}}
\newcommand{\WN}{\WVD}
\newcommand{\WVE}{\Omega_{V,e}}
\newcommand{\WVPD}{\Omega_{V',d}}
\newcommand{\WNp}{\WVPD}
\newcommand{\wv}{W \otimes V}
\DeclareMathOperator{\GL}{GL}
\newcommand{\glv}{\GL(V)}
\newcommand{\blank}{ \rule[0.1cm]{0.3cm}{0.1pt}}
\newcommand{\Blank}{ \rule[0.1cm]{0.4cm}{0.1pt}}
\newcommand{\ten}{\blank \otimes V}
\newcommand{\TV}{\cT_V}
\newcommand{\PV}{\cP_V}
\newcommand{\PD}{\cP_d}
\newcommand{\pd}{\PD}
\newcommand{\PVD}{\cP_{V,d}}
\newcommand{\pv}{\PVD}
\newcommand{\HVD}{\cH_{V,d}}
\newcommand{\Exterior}{\mathchoice{{\textstyle\bigwedge}}%
    {{\bigwedge}}%
    {{\textstyle\wedge}}%
    {{\scriptstyle\wedge}}}
\newcommand{\we}{\Exterior}
\newcommand{\Sl}{\cS_{\lambda}}
\newcommand{\Slp}{\cS_{\lambda'}}
\newcommand{\Sm}{\cS_{\mu}}
\newcommand{\Smp}{\cS_{\mu'}}
\newcommand{\Sn}{\cS_{\nu}}
\newcommand{\clw}{c_{\mu \nu}^{\lambda}}
\newcommand{\ml}{m_{\lambda}}
\newcommand{\rep}{\mathbf{Rep}_V}
\newcommand{\ve}{ \mathbf{Vec}}
\newcommand{\poly}{\mathbf{Poly}}
\newcommand{\polyd}{\poly_d}
\newcommand{\gp}{\mathbf{GPoly}}
\newcommand{\gv}{ \mathbf{GVec}}
\DeclareMathOperator{\F}{{\bf Fun}}
\newcommand{\fvr}{\F(\ve,\rep)}
\newcommand{\frr}{\F(\rep,\rep)}
\DeclareMathOperator{\sym}{Sym}
\newcommand{\mm}{\mathfrak{m}}
\DeclareMathOperator{\reg}{reg}
\newcommand {\ia}{\cI_{\cA}}
\newcommand {\ja}{\cJ_{\cA}}
\title{Resolutions of ideals associated to\\ subspace arrangements}
\date{}
\author{Francesca Gandini \\ fragandi@umich.edu}
\begin{document}

\maketitle
\begin{abstract}
Given a collection of $t$ subspaces in an $n$-dimensional $\mathbb{K} $-vector space $W$ we can associate to them $t$ vanishing ideals in the symmetric algebra $\mathcal{S}(W^*) = \KK[x_1,x_2,\dots,x_n]$. As a subspace is defined by a set of linear equations, its vanishing ideal is generated by linear forms so it is a \emph{linear ideal}. Conca and Herzog showed that the Castelnuovo-Mumford regularity of the product of $t$ linear ideals is equal to $t$. Derksen and Sidman showed that the Castelnuovo-Mumford regularity of the intersection of $t$ linear ideals is at most $t$ and they also showed that similar results hold for a more general class of ideals constructed from linear ideals. In this paper we show that analogous results hold when we replace the symmetric algebra $\mathcal{S}(W^*)$ with the exterior algebra $ \bigwedge(W^*)$ and work over a field of characteristic 0. To prove these results we rely on the functoriality of free resolutions and construct a functor $\Omega$ from the category of polynomial functors to itself. The functor $\Omega$  transforms resolutions of ideals in the symmetric algebra to resolutions of ideals in the exterior algebra.

\vspace{12pt}

\noindent \emph{Keywords:} subspace arrangement, exterior algebra, Castelnuovo-Mumford regularity, equivariant resolution
 	
\noindent \emph{MSC:} 13D02, 13P20, 16E05, 20C32

\end{abstract}

{\let\thefootnote\relax\footnote{{The author was partly supported by NSF grant DMS-1601229.}}}

\section{Introduction}

By a subspace arrangement we mean a finite collection of subspaces in Euclidean space. Questions about the complement of a real hyperplane arrangement date back to the mid-1800's, whilst the more recent trend of research investigates general subspace arrangements in combinatorics, topology, and complexity theory (see the survey \cite{b}). In this study we investigate these objects from an algebraic perspective. There are two main types of algebraic structures associated to a subspace arrangement: the cohomology ring of the complement of a hyperplane arrangement and the vanishing ideal of a subspace arrangement. Both types are discussed in the survey \cite{ss}. Here we study topics related to the vanishing ideal of a subspace arrangement.

In 1999 Derksen conjectured that
the vanishing ideal of a union of $t$ subspaces is generated by polynomials of degree at most $t$. He used this conjecture on subspace arrangements to establish a bound on the degree of invariants of finite groups. Specifically, he proved that in the non-modular case (when the group order does not divide the characteristic of the base field) Noether's degree bound holds if the conjecture holds for $t=|G|$. Bernd Sturmfels made an even stronger conjecture: the vanishing ideal of a union of $t$ subspaces has Castelnuovo-Mumford regularity at most $t$.
Derksen's result on the connection between invariants and subspace arrangements sparked our interest in studying ideals associated to subspace arrangements to prove results in invariant theory. In this paper, we study ideals of subspace arrangements over the exterior algebra via their connection with ideals over the symmetric algebra. In a later publication we will connect these results to the context of non-commutative invariant theory. In particular, in her thesis \cite{fg} the author proved an analog of Noether's Degree Bound \cite{em} for invariant skew polynomials in characteristic zero.

Suppose that $W_1,W_2,\dots,W_t$ are subspaces of an $n$-dimensional $\KK$-vector space $W\cong \KK^n$ and let $I_1,I_2,\dots,I_t\subseteq \KK[x_1,x_2,\dots,x_n]$ be the vanishing ideals of $W_1,W_2,\dots,W_t$. These vanishing ideals are \emph{linear} ideals in the sense that they are generated by linear forms. Conca and Herzog showed that the Castelnuovo-Mumford regularity of the product ideal $I_1I_2\cdots I_t$ is equal to $t$ (see~\cite{ch}). Derksen and Sidman proved Sturmfels' conjecture, namely they showed that the Castelnuovo-Mumford regularity of the intersection ideal $I_1\cap I_2\cap \cdots\cap I_t$ is at most $t$ (see~\cite{ds1}); similar results hold for more general ideals constructed from linear ideals (see~\cite{ds2}). Because it is possible to use the regularity of an ideal to bound the degree of its generators, then a regularity result yields a degree bound for the generators. 

Our approach is to study the product and the intersection of linear ideals over the exterior algebra. Over the symmetric algebra $ \cS(W^*) = \KK[x_1,x_2,\dots,x_n]$, we have good bounds on the Castelnuovo-Mumford regularity (hereafter just referred to as regularity). We leverage these results for the symmetric algebra $ \cS(W^*)$ to prove similar regularity bounds over the exterior algebra $ \we(W^*)$. 

In the literature, monomial and square-free ideals over the exterior algebra have been studied in relation to their analogues in the symmetric algebra. In particular, monomial ideals in the exterior algebra have been studied in \cite{aa}. Using square-free modules in the exterior algebra, one can define a generalization of Alexander's duality (see \cite{r}). In the context of hyperplane arrangements, the homology and the cohomology rings of the complement of the arrangement are modules over the exterior algebra and have been studied in \cite{e}. These results rely on the idea of creating a connection between resolutions over the symmetric algebra and resolutions over the exterior algebra. Our approach also relies on a similar idea, even though it exploits a different method: a functor on polynomial functors.

Our methods allow us to study any finite wedge product of linear ideals in the exterior algebra. In particular, we have the following result.
\begin{Theorem}\label{nice}
Assume that $V$ is a finite-dimensional vector space over a field of characteristic zero. In the exterior algebra $\we (V)$, the wedge product of a finite number of linear ideals has a linear resolution. 
\end{Theorem}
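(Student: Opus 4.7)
The plan is to reduce the claim to the analogous statement over the symmetric algebra and transport it through the functor $\Omega$ built in the paper. Let $I_1, \ldots, I_t \subseteq \we(V)$ be the given linear ideals, and let $J_1, \ldots, J_t \subseteq \cS(V)$ be the linear ideals in the symmetric algebra generated by the same linear forms. By the theorem of Conca and Herzog, the product $J_1 J_2 \cdots J_t$ has Castelnuovo--Mumford regularity exactly $t$. Since this product is generated in degree $t$, the statement $\reg(J_1 \cdots J_t) = t$ is equivalent to saying that its minimal free resolution over $\cS(V)$ is linear, i.e., every differential is represented by a matrix of linear forms.

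Next I would apply $\Omega$ term by term to this minimal linear resolution. Because the base field has characteristic zero, the category of polynomial functors is semisimple and the functor $\Omega$ preserves exact sequences; thus the image is an exact complex of polynomial functors which, evaluated at $V$, is a free complex over $\we(V)$. The essential identification is that $\Omega$ sends the symmetric-algebra product of linear ideals to the wedge product of linear ideals: $\Omega(J_1 \cdots J_t) = I_1 \we \cdots \we I_t$. This reduces to checking that $\Omega$ intertwines the symmetric multiplication on $\cS$ with wedge multiplication on $\we$ at the level of the polynomial functors that represent these ideals.

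The final step is to confirm that linearity is preserved. Since $\Omega$ is designed by replacing symmetric powers $\sym^k$ with exterior powers $\we^k$ inside the Schur-functor decomposition of each term, it respects the internal grading degree-for-degree. A differential represented by linear forms over $\cS(V)$ is therefore sent to a differential represented by linear forms over $\we(V)$, so the complex produced in the previous step is a linear resolution of $I_1 \we \cdots \we I_t$.

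The main obstacle, and presumably the technical heart that the earlier part of the paper has to deliver, is the precise analysis of $\Omega$ on the polynomial functors describing product ideals and their syzygies: one must verify both that $\Omega$ identifies the wedge product of linear ideals with the image of the symmetric product, and that it transports the internal grading in a degree-preserving way so that linearity survives. The characteristic-zero hypothesis is used exactly here, through semisimplicity of polynomial functors and the ensuing Schur--Weyl dictionary that makes the symmetric-to-exterior swap a well-defined exact functor.
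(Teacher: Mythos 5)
Your overall strategy is the same as the paper's: invoke Conca--Herzog's $t$-regularity of a product of $t$ linear ideals over the symmetric algebra, then transport that resolution to the exterior algebra via $\Omega$, and identify the resulting ideal with the wedge product (Proposition~\ref{mult}) to conclude linearity. You also correctly identify the degree-preservation of $\Omega$ on Schur components (used in the proof of Proposition~\ref{keyprop} through the inequality $\deg\W(\cE_i)(V)\le\deg\cE_i(V)$) and the identification $\Omega(J_1\cdots J_t)=I_1\wedge\cdots\wedge I_t$ as the essential technical inputs.

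There is, however, a genuine gap at the very first step of the transport: $\Omega$ is a functor on the category $\gp$ of graded \emph{polynomial functors}, not on graded modules over a fixed ring. The ideal $J_1\cdots J_t\subseteq\cS(V)$ and its minimal free resolution over $\cS(V)$ live in the category of $\cS(V)$-modules for a particular $V$; the phrase ``apply $\Omega$ term by term to this minimal linear resolution'' is not defined until those objects are rebuilt as polynomial functors. The missing move is the tensor construction of Section~9: fix the ambient space $W$ and the subspaces $Y_i\subseteq W$, and define the module functor $\ja$ in $\gp$ by $\ja(V)=\prod_i\II(Y_i\otimes V^*)\subseteq\cS(W\otimes V)$, a functor of the auxiliary variable $V$. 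One then takes an equivariant minimal resolution of $\ja$ \emph{in} $\gp$, applies $\Omega$ there, uses Proposition~\ref{keyprop} to control regularity, and only at the very end specializes to $\dim V=1$ to land back in $\we(W)$ and recover the wedge product of the original linear ideals (this is precisely the proof of Theorem~\ref{bignice}). Without this functorialization step there is no object to feed into $\Omega$, so the argument as written does not get off the ground; once it is inserted, your outline matches the paper's proof.
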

Specifically, this theorem is a direct consequence of our main result, Theorem~\ref{bignice}, which establishes that the wedge product of $t$ linear ideals is $t$-regular. In general, we are interested in computing the regularity of a module because this numerical invariant gives us a measure of its complexity. Even for ideals that are simple to describe, it can be hard to explicitly compute their regularity. Moreover, even prime ideals can have very large regularity as shown in \cite{mp} by McCullough and Peeva's counterexamples to the Eisenbud-Goto Regularity Conjecture \cite{eg}. Our result shows that working with ideals constructed from linear ideals, we have the best possible regularity bound irrespective of whether we work over the symmetric algebra or the exterior algebra. 

To study ideals in the exterior algebra, we construct a way to transfer information between the symmetric algebra and the exterior algebra. To this goal, we consider ideals of subspace arrangements that are stable under the action of the general linear group and study them using the tools of representation theory. Specifically, we describe a functor, $\W$, on the category of graded polynomial functors.  The functor $\W$ is the transpose functor used by Sam and Snowden \cite{tca, sam1, sam2} to study modules over twisted commutative algebras. The functor $\W$ will transfer homological properties from equivariant resolutions over the symmetric algebra to equivariant resolutions over the exterior algebra.

Ideals with the additional structure of a group representation exhibit interesting behavior even in simple examples. 
In fact, the Hilbert series of the vanishing ideal of a hyperplane arrangement of $d$ hyperplanes in $n$-dimensional space is just $t^d/(1-t)^n$. However, the Hilbert series of an ideal which is stable under a group action is a much more interesting object. In fact, one can define the notion of equivariant Hilbert series of $\glv$-equivariant ideals. We compute equivariant Hilbert series of ideals of subspace arrangements using the combinatorial formula from \cite{sym} and use these computations to write down equivariant resolutions of ideals associated to subspace arrangements. The resolutions considered will be $\glv$-equivariant, meaning that all modules in the resolution will be $\glv$-representations and all maps in the resolution will be maps of $\glv$-representations.

We begin this paper by introducing some background on polynomial functors. In particular, we define the category $\gp$ in which we will operate and we introduce the notion of algebra and module functors in this category. In Sections 3-6 we proceed in the technical construction of the functor $\W$ on the category $\gp$. In Section 7-8 we introduce resolutions in $\gp$ and we study the effect of $\W$ on these resolutions. We also define Castelnuovo-Mumford regularity in this context and show that applying the functor $\W$ does not change the value of this homological invariant. Finally, in Sections 9-10 we introduce the module functors of a subspace arrangement, prove the main result, and provide some examples of computations. 

\subsection*{Acknowledgments}
The author acknowledges the patience and insights of her thesis advisor Harm Derksen. She is also thankful to David Eisenbud for providing several helpful references that were included in the above introduction and to Andrew Snowden for explaining the connection with the work of Sam and Snowden.

\section{Polynomial functors}

In our discussion of polynomial functors we follow the classical treatment of Macdonald \cite{mac}. 
Let us fix a field $\KK$ of characteristic 0.
Let us denote by $\ve$ the category of finite dimensional $\KK$-vector spaces whose morphisms are  the $\KK$-linear maps. This abelian category also has a tensor product, which makes $\ve$ into a symmetric monoidal category.

\begin{Definition}
A functor $\cF$ from $\ve$ to $\ve$ is a polynomial functor if  the map 
$$\cF:\Hom(X,Y) \to \Hom(\cF(X),\cF(Y))$$ is a polynomial mapping for all finite dimensional $\KK$-vector spaces $X,Y$. We say that $\cF$ is homogeneous of degree $d$ if $\cF(\lambda h)=\lambda^d \cF(h)$ for every linear map $h \in \Hom(X,Y)$ and every scalar $\lambda\in \KK$.
\end{Definition}

Let $\cF$ be a polynomial functor. 
We will consider the category of polynomial functors $\poly$. The morphisms in $\poly$ are natural transformations of functors. 
If $\cF$ and $\cG$ are polynomial functors, we define the direct sum functor $\cF \oplus \cG :\ve\to \ve$ by
$(\cF\oplus \cG)(X)=\cF(X)\oplus \cG(X)$ for every finite dimensional vector space $X \in \Obj(\ve)$, and 
$$(\cF\oplus \cG)(h)=\begin{pmatrix}\cF(h)& 0 \\0 & \cG(h)\end{pmatrix}\in \Hom(\cF(X)\oplus \cG(X),\cF(Y)\oplus \cG(Y))$$
 for every linear map $h:X\to Y$.
 We can also define the tensor product of two polynomial functors $\cF$ and $\cG$ by $(\cF\otimes \cG)(X)=\cF(X)\otimes \cG(X)$ for every
 finite dimensional vector space and $(\cF\otimes \cG)(h)=\cF(h)\otimes \cG(h):\cF(X)\otimes \cG(X)\to \cF(Y)\otimes \cG(Y)$ for any linear map $h:X\to Y$.
 This makes $\poly$ into an abelian symmetric monoidal category. If $\cF$ and $\cG$ are homogeneous polynomial functors of degree $d$ and $e$ respectively,
 then $\cF\otimes \cG$ is homogeneous of degree $d+e$.

 For categories {\bf A} and {\bf B} we denote the category of all functors from {\bf A} to {\bf B} by $\F({\bf A},{\bf B})$. Morphisms in $\F({\bf A},{\bf B})$ are natural transformations.
 We can view $\poly$ as a subcategory of $\F(\ve,\ve)$.
 
 For an $n$-dimensional vector space $V$, let $\GL(V)\subseteq \Hom(V,V)$ be the group of invertible linear maps from $V$ to $V$. A polynomial functor $\cF$ gives a polynomial map
 $\Hom(V,V)\to \Hom(\cF (V),\cF (V))$ that restricts to a group homomorphism $\rho:\GL(V)\to \GL(\cF(V))$. This means that $\cF(V)$ is a polynomial representation of $\GL(V)$.

 A partition is a sequence $\lambda=(\lambda_1,\lambda_2,\dots,\lambda_r)$ of positive integers with $\lambda_1\geq \lambda_2\geq \cdots\geq \lambda_r$.
 For each partition $\lambda$ one can define a polynomial functor $\cS_\lambda:\ve\to \ve$ that is homogeneous of degree $|\lambda|=\lambda_1+\lambda_2+\cdots+\lambda_r$. For a finite dimensional vector space $V$, the representation $\cS_\lambda (V)$ is an irreducible representation of $\GL(V)$.
  The space $\cS_{(d)}(V)=\sym^d(V)$ is the $d$-th symmetric power of $V$ whilst the space $\cS_{(1,1,\dots,1)}(V)=\cS_{(1^d)}(V)=\we^d(V)$
 is the $d$-th exterior power of $V$.
   It follows from Schur's lemma that 
 $$\Hom(\cS_\lambda,\cS_\mu)=\begin{cases}
 \KK & \mbox{if $\lambda=\mu$;}\\
 0 & \mbox{if $\lambda\neq \mu$.}
 \end{cases}
 $$
 Every polynomial functor is naturally equivalent to a finite direct sum of $\cS_\lambda$'s. 
 By grouping the $\cS_\lambda$'s together we see that
 every  polynomial functor $\cP \in \poly$ is naturally equivalent to a direct sum $\cP= \bigoplus_d \cP_d$, where $\cP_d$ is a homogeneous polynomial functor of degree $d$. We will denote the full subcategory of homogeneous polynomial functors of degree $d$ by $\poly_d$. For more details, the interested reader can consult \cite[p.~150]{mac}.

Let $\rep$ denote the category of finite dimensional rational representations of $\glv$ where the  morphism are $\glv$-equivariant linear maps. 

\begin{Lemma}\label{PV}
A polynomial functor $\cP$ on the category of finite dimensional vector spaces $\ve$ induces a functor $\PV$ on the category of $\glv$-representations $\rep$. 
\end{Lemma}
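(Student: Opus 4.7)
The plan is to lift $\cP$ to the category $\rep$ by exploiting the fact that $\cP$, as a polynomial functor on $\ve$, automatically sends invertible linear maps to invertible linear maps, and hence turns representations into representations through functoriality. On objects, given $W\in\rep$ with structure homomorphism $\rho_W:\glv\to\GL(W)$, I would set $\PV(W):=\cP(W)$ as a vector space and equip it with the $\glv$-action given by the composition
\[
\glv\xrightarrow{\rho_W}\GL(W)\hookrightarrow\Hom(W,W)\xrightarrow{\cP}\Hom(\cP(W),\cP(W)).
\]
Because $\cP$ preserves identities and composition, its image on $\GL(W)$ lies in $\GL(\cP(W))$, so the composition is a well-defined group homomorphism $\glv\to\GL(\cP(W))$. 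Since $\rho_W$ is a rational representation and the restriction of $\cP$ to $\Hom(W,W)$ is a polynomial mapping, the composition is rational in the entries of $\glv$; this makes $\PV(W)$ an object of $\rep$.

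On morphisms, given a $\glv$-equivariant linear map $f:W_1\to W_2$, I would define $\PV(f):=\cP(f):\cP(W_1)\to\cP(W_2)$. To see that $\PV(f)$ is $\glv$-equivariant with respect to the actions constructed above, apply $\cP$ to the equivariance identity $f\circ \rho_{W_1}(g) = \rho_{W_2}(g) \circ f$, valid for every $g\in\glv$; functoriality of $\cP$ yields
\[
\cP(f)\circ\cP(\rho_{W_1}(g))=\cP(\rho_{W_2}(g))\circ\cP(f),
\]
which is exactly the assertion that $\PV(f)$ intertwines the two $\glv$-actions. Preservation of identities and compositions by $\PV$ is inherited directly from the corresponding properties of $\cP$, so $\PV:\rep\to\rep$ is indeed a functor.

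I do not expect a substantive obstacle in this argument; the one point that merits attention is verifying that the action we build on $\cP(W)$ is actually \emph{rational}, rather than just a group homomorphism. This check reduces to the observation that the coordinate functions of $\cP|_{\Hom(W,W)}$ are polynomials in the matrix entries of $\Hom(W,W)$, so precomposing with the rational map $\rho_W$ produces a rational map $\glv\to\GL(\cP(W))$, which is the defining condition for an object of $\rep$.
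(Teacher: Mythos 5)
Your argument is the same as the paper's: define $\PV$ on objects by composing $\rho_W$ with the group homomorphism $\GL(W)\to\GL(\cP(W))$ induced by $\cP$, and on morphisms by applying $\cP$ and invoking functoriality to transport equivariance. Your additional remark confirming that the resulting action is rational (not merely a group homomorphism) is a useful extra check that the paper leaves implicit.
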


\begin{proof}
Let us consider a $\glv$-representation $\rho_U:\glv\to \GL(U)$. The polynomial functor $\cP$ gives a polynomial map $\Hom(U,U)\to \Hom(\cP(U),\cP(U))$
which restricts to a representation $\GL(U)\to \GL(\cP(U))$. The composition $\glv\to\GL(U)\to\GL(\cP(U))$ makes $\cP(U)$ into a representation of $\glv$.

 Let $\phi$ be a $\glv$-equivariant map from $U$ to $U'$, so that for all $g \in \glv$ the following diagram commutes:
\[
\begin{tikzcd}
 U \arrow[r, "\phi"] \arrow[d, "\rho_U(g)"] & U' \arrow[d, "\rho_U(g)"] \\
 U  \arrow[r, "\phi"] & U' 
\end{tikzcd} .\]

Applying $\cP$ to this diagram, we notice that the resulting diagram also commutes as 
\[ \cP(\rho_U(g))\cP(\phi) = \cP(\rho_U(g) \phi) = \cP(\phi \rho_U(g)) = \cP(\phi) \cP(\rho_U(g)), \]
by functoriality of $\cP$ and our assumptions on $\phi$. This shows that $\cP(\phi):\cP(U)\to\cP(U')$ is $\glv$-equivariant.
 
 We conclude that  $\cP$ induces a functor from $\rep$ to itself.
\end{proof}

We can consider the category $\poly_V$ of polynomial functors from $\rep$ to itself. Morphisms in the category $\poly_V$ are  $\glv$-equivariant natural transformations. An object $\cP$ is the category $\poly$ induces an object $\cP_V$ in $\poly_V$ by Lemma~\ref{PV}.

\section{The category $\gp$}
We also consider the category $\gv$ of graded vector spaces.
The objects of $\gv$ are graded vector spaces $V=\bigoplus_{d=0}^\infty V_d$ such that $V_d$ is finite dimensional for all $d$.
A morphism $\phi:V\to W$ in the category $\gv$ is a linear map that respects the grading, i.e., $\phi(V_d)\subseteq W_d$ for all $d$. 
The tensor product of two graded vector spaces $V,W$ in $\gv$ is defined by $(V\otimes W)_d=\bigoplus_{e=0}^d V_e\otimes W_{d-e}$.
This makes $\gv$ into a symmetric monoidal category.

Next we describe the full subcategory $\gp$ in the functor category $\F(\ve, \gv)$. 
\begin{Definition}
An object $\cF$ in $\gp$ is a functor in $\F(\ve, \gv)$ with the property that
\[ V \mapsto \cF(V)_d \]
is a homogeneous polynomial functor of degree $d$. Morphisms in $\gp$ are natural transformations.
\end{Definition}

An example of a functor in $\gp$ is the functor $\cS = \sym$, mapping a vector space $V$ to the symmetric algebra $\cS(V)= \sym(V)$ on $V$. Similarly, another such functor in $\gp$ is the exterior functor $\we$ that maps a vector space $V$ to its exterior algebra $\we(V)$.

The category $\gp$ is a symmetric monoidal category via the tensor structure inherited from  $\gv$.
In $\gp$ we have that 
\[ ((\cF \otimes \cG) (V))_d = (\cF(V) \otimes \cG(V))_d = \textstyle \bigoplus_{e=0}^d \cF(V)_e \otimes \cG(V)_{d-e}. \] 

We will also view $\KK$ as an object in $\gp$ as the functor that sends every vector space to the graded vector space $\KK$ concentrated in degree 0. The object $\KK$ is the identity in the monoidal category $\gp$. This means that we have a natural equivalence $\kappa:\KK\otimes \cF\to \cF$ for every object $\cF$ in $\gp$. 

\subsection{Algebras and modules in $\gp$}
We will define algebra functors and module functors in $\gp$. These are objects in $\gp$ that satisfy axioms analog to the axioms of algebras and modules, respectively.

\begin{Definition}
An object $\cR$ in  $\gp$ is called an algebra functor if it comes equipped with a multiplication $\mu: \cR \otimes \cR \to \cR$ (i.e.,  a natural transformation of the functor $\cR\otimes \cR$ to the functor $\cR$) and an identity ${\bf 1}:\KK\to \cR$ that satisfy the following axioms.
\begin{description}[font=\normalfont\itshape]
\item[Connected] ${\bf 1}_0:\KK_0 \to \cR_0$ is a natural equivalence. Hence, we assume $\cR_0(V)\cong \KK$ for all vector spaces $V$;
\item[Identity] the following diagram commutes
\[
\xymatrix{
 \KK \otimes \cR \ar[r]^{\kappa} \ar[d]_{\bf{1}\otimes \Id_{\cR}} & \cR \ar[d]^{\Id_{\cR}} \\
\cR\otimes \cR \ar[r]_{\mu} & \cR};
\]
\item[Associative] the following diagram commutes
\[
\xymatrix{
(\cR\otimes \cR)\otimes \cR\ar[rr]^{\cong} \ar[d]_{\mu\otimes\Id_{\cR}} &  & \cR \otimes (\cR \otimes \cR) \ar[d]^{\Id_{\cR} \otimes \mu} \\
\cR\otimes \cR \ar[r]_{\mu} & \cR & \cR \otimes \cR \ar[l]^{\mu} }.
\]
\end{description}
\end{Definition}

We will define a (left) module in a similar fashion.

\begin{Definition}
Given an algebra functor $(\cR, \mu, {\bf 1})$, a left module functor $\cM$ over $\cR$ is an object $\cM$ in $\gp$ equipped with a natural transformation $\nu :  \cR \otimes \cM \to \cM$ that satisfies the following axioms.
\begin{description}[font=\normalfont\itshape]
\item[Identity] the following diagram commutes
\[
\xymatrix{
 \KK\otimes \cM \ar[dr]_{\kappa} \ar[rr]^{{\bf 1}\otimes \Id_{\cM}} & & \cR \otimes \cM \ar[dl]^{\nu} \\
 & \cM & };
\]
\item[Associative] the following diagram commutes
\[
\xymatrix{
(\cR\otimes \cR)\otimes \cM \ar[rr]^{\cong} \ar[d]_{\mu \otimes \Id_{\cM}  } &  & \cR \otimes (\cR \otimes \cM)\ar[d]^{\Id_{\cR}\otimes \nu}\\
\cR \otimes \cM \ar[r]_{\nu} & \cR & \cR \otimes \cM \ar[l]^{\nu} }.
\]
\end{description}

\end{Definition}

Notice that for any vector space $V$ and for every algebra functor $\cR$, we have that $\cR(V)$ is a $\KK$-algebra. Similarly, $\cM(V)$ is a left module over $\cS(V)$. Moreover, the above axioms give us that for every $f \in \Hom(\ve)$, we have that $\cR(f)$ is a homomorphism of $\KK$-algebras.
The symmetric algebra functor $\cS = \sym$ is an example of an algebra functor in $\gp$. If $\cM$ is a left module functor over $\cS$, then for every $V \in \Obj( \ve)$ we have that $\cM(V)$ is a left module over $\cS(V)$. 

\begin{Example}
Consider the symmetric algebra functor $\cS$. For any $n$--dimensional vector space $V$, we have the maximal homogeneous ideal $\cM (V) = (x_1, \ldots, x_n)$ in $\cS(V)= \R$. Thus, in $\gp$ we have a module functor $\cM$ over $\cS$ defined on $\Obj(\ve)$ by $V \mapsto \cM(V)$. Notice that a minimal equivariant resolution for $\cM$ is the Koszul resolution:
\[ \cdots \to \cS \otimes \cS_{(1,1,1)} \to \cS \otimes \cS_{(1,1)} \to \cS \otimes \cS_{(1)} \to \cM \to 0, \] 
an infinite resolution. We will discuss resolutions of modules in $\gp$ at the end of this chapter.
\end{Example}

\subsection{Connections to twisted commutative algebras}
The constructions in this sections are closely related to the notion of twisted commutative algebras studied by Sam and Snowden (\cite{sam1,sam2,tca}).
In particular, our definition of the category $\gp$ is closely related to one interpretation of the category $\cV$ in \cite{tca}. The difference is that we prefer to work with a graded category of polynomial functors and allow for infinite direct sums, rather than considering a category whose objects are representations of $\GL_{\infty}$. 

Algebras in $\gp$ satisfy the same axioms as twisted commutative algebras in the category $\cV$. For this reason, our algebra functors are twisted commutative algebras if one prefers to consider them as objects in the category $\cV$ instead of the category $\gp$. 

Next, we will consider the functor $\W$ on the category $\gp$. The functor $\W$ is the translation of the transpose functor on the category of representations of the symmetric group to the context of representations of the general linear group. Concretely, $\W$ maps the Schur functor $\Sl$ to the Schur functor $\Slp$.

Towards the end of the chapter we will use $\W$ to establish a connection between modules over $\sym$ and modules over $\we$. In \cite{sam2} the authors had already established this connection and used it to prove regularity results.
For example, for a fixed $d$ Sam and Snowden prove that a finitely generated module over the twisted commutative algebra
$V\mapsto \sym(V^d)$ has finite regularity (\cite[Corollary 7.8]{sam2}). Moreover, using their results, one can establish that a finitely generated module over the twisted commutative algebra $V\mapsto \we(V^d)$ also has finite regularity. Furthermore, Snowden used twisted commutative algebras to give bounds to the minimal resolution of invariant rings of finite groups in \cite{sno}.

We include in the next sections a self-contained treatment of the subject, for the benefit of the reader. We start by constructing the functor $\W$ from $\gp$ to itself. One important feature of $\W$ is that $\W(\cS) = \we$. In general, for any homogeneous polynomial functor $\cF_d$ of degree $d$,  $\W (\cF_d)$ will be another homogeneous polynomial functor of degree $d$. In fact, we will first construct  $\W_d$, the $d$th graded piece of $\W$, a functor from the category of homogeneous polynomial functors of degree $d$ to itself. The functor $\W$ can be found in the literature in the context of $\GL_{\infty}$-representations ~\cite[p.~1102]{sam1}. In that context $\W$ is called the transpose functor and it is defined for representations of the infinite symmetric group. The transpose functor is then transferred to $\GL_{\infty}$-representations via Schur-Weyl duality. For the convenience of the reader, we present a construction which does not require previous knowledge of the structure theory of $\GL_{\infty}$ representations.

\section{Definition of $\W_d$ on the category $\poly_d$}
Each object $\cF$ in the category $\gp$ is defined by specifying its graded pieces $\cF_d$. Recall that each $\cF_d$ is required to be some homogeneous polynomial functor of degree $d$. For this reason, we can define a functor $\W$ on $\gp$ by specifying a functor $\W_d$ on the homogeneous polynomial functors of degree $d$ for each degree $d$.

Let $\poly_d$ be the full subcategory of $\poly$ consisting of homogeneous polynomial functors of degree $d$. To be able to define $\W_d$ we will need to go through a multi-step process. The first sections will aim to define a functor $\WVD$ on $\poly_d$ for any fixed vector space $V$. Then we will define $\W_d$ as a direct limit of functors $\WVD$.

We start with a construction from category theory.
For a functor $\cF:\mathbf{A}\to \mathbf{B}$, we can define the functor $\cF^*: \F(\mathbf{B},\mathbf{C}) \to \F(\mathbf{A},\mathbf{C})$ by $\cF^*(\cG)= \cG \circ \cF$, for any functor $\cG:\mathbf{B} \to \mathbf{C}$. Similarly, for any fixed $\cG \in \F(\mathbf{B},\mathbf{C})$, we can define $\cG_*: \F(\mathbf{A},\mathbf{B}) \to \F(\mathbf{A},\mathbf{C})$ by $\cG_*(\cF)= \cG \circ \cF$, for any functor $\cF:\mathbf{A}\to \mathbf{B}$.

We fix a vector space $V$ of dimension $n$. Let us consider the category $\fvr$. We define the functor $\TV:\ve\to\rep$ as the functor $\ten$ which acts by mapping $W \in \Obj(\ve)$ to $\wv \in \Obj(\rep)$. Notice that $\glv$ acts on $\wv$ by trivial action on $W$ and left multiplication on $V$. Let us fix a degree $d$ such that $n \geq d$, where we recall that $n$ is the dimension of the fixed vector space $V$. In the category $\frr$ we consider the full subcategory $\poly_{V,d}$ of homogeneous polynomial functors of degree $d$.

Recall that an object $\cP_d$ in $\poly_d$ induces an object $\cP_{V,d}$ in $\poly_{V,d}$ by Lemma~\ref{PV}.
Finally, let us consider the functor $\HVD:\rep\to\ve$, defined as $\Hom_{\glv}(\we^d(V), \Blank)$. On objects, we have that $\HVD$ maps a $\glv$-representation $U$ to its $\we^d(V)$-isotopic component.
\begin{Definition}
For a polynomial functor $\cP_d$ of degree $c$,  the functor $\WVD(\cP_d):\ve\to\ve$ is defined by 
\[ \WVD(\cP_d) = (\HVD)_*\TV^*( \PVD )=\HVD\circ \PVD\circ\TV\]  
\end{Definition}

The following commuting diagram illustrates the effect of $\WVD(\pd)$ on objects in the category $\ve$. 
\[
\begin{tikzcd}[column sep = 1.2in]
 W \arrow[r, "\TV"] \arrow[d, "\WVD(\pd)" left] & \wv  \arrow[d, "\pv"] \\
 \WVD(\pd)(W) &  \pv(\wv) \arrow[l, "\HVD"] 
\end{tikzcd} \]

From our definition of $\WVD (\cP_d)$, it is clear that this functor depends on the choice of the polynomial functor $\cP_d$ and the choice of a vector space $V$. Our goal is to be able to define a new functor, $\W_d:\polyd\to\polyd$. To be able to do so, we consider the following lemma. 

\begin{Lemma}
The functor $\WVD (\cP_d)$ on $\ve$ is a homogeneous polynomial functor of degree $d$.
\end{Lemma}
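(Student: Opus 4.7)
The plan is to write $\WVD(\cP_d)$ as the composition $\HVD \circ \cP_{V,d} \circ \TV$ and analyze the effect of each factor on morphisms. Given $h \in \Hom(W,W')$, the first step is to observe that $\TV(h) = h \otimes \mathrm{id}_V$ is linear (hence polynomial of degree $1$) in $h$, and crucially satisfies $(\lambda h) \otimes \mathrm{id}_V = \lambda(h \otimes \mathrm{id}_V)$ for every $\lambda \in \KK$. The resulting morphism is automatically $\glv$-equivariant, since $\glv$ acts trivially on the $W$-factor, so $\TV$ genuinely lands in the morphisms of $\rep$.

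Next I would apply $\cP_{V,d}$. By Lemma~\ref{PV}, $\cP_{V,d}$ is obtained by restricting the polynomial map
\[ \cP_d\colon \Hom(W \otimes V, W' \otimes V) \to \Hom(\cP_d(W \otimes V), \cP_d(W' \otimes V)) \]
to the subspace of $\glv$-equivariant maps. In particular, the assignment $\phi \mapsto \cP_{V,d}(\phi)$ is polynomial of degree $d$ in $\phi$, so precomposing with the linear map $\TV$ produces a polynomial map $h \mapsto \cP_{V,d}(h \otimes \mathrm{id}_V)$ of degree $d$ in $h$. Finally, the functor $\HVD = \Hom_{\glv}(\we^d(V), -)$ acts on morphisms by post-composition, which is $\KK$-linear; composing a polynomial map of degree $d$ with a linear map preserves both polynomiality and degree. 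Consequently $h \mapsto \WVD(\cP_d)(h)$ is a polynomial map of degree at most $d$ from $\Hom(W,W')$ to $\Hom(\WVD(\cP_d)(W), \WVD(\cP_d)(W'))$.

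Tracking a scalar $\lambda$ through the three steps confirms homogeneity: $\TV$ turns $\lambda h$ into $\lambda(h \otimes \mathrm{id}_V)$, then $\cP_{V,d}$ pulls out a factor of $\lambda^d$ by homogeneity of $\cP_d$, and $\HVD$ being $\KK$-linear keeps the factor $\lambda^d$ outside. Hence $\WVD(\cP_d)(\lambda h) = \lambda^d \WVD(\cP_d)(h)$, so $\WVD(\cP_d)$ is homogeneous of degree exactly $d$. The argument is largely bookkeeping; the one point that deserves care is that although $\cP_{V,d}$ is defined on $\rep$, its action on $\glv$-equivariant morphisms is still governed by the ambient polynomial map $\cP_d$, so the degree estimate really is inherited from $\cP_d$ rather than something that needs to be re-proved in the equivariant setting.
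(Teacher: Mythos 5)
Your proof is correct and follows essentially the same route as the paper: decompose $\WVD(\cP_d)$ as the composite $\HVD\circ\cP_{V,d}\circ\TV$, note that $\TV$ and $\HVD$ are homogeneous linear while $\cP_{V,d}$ is homogeneous of degree $d$, and conclude the composite is homogeneous polynomial of degree $d$. Your explicit tracking of the scalar $\lambda$ through the three stages is a welcome bit of added detail that the paper leaves implicit.
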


\begin{proof}
Recall the assumption that $P_d$ was itself a homogeneous polynomial functor of degree $d$. We have defined 
$\WVD(\cP_d) = \HVD \circ \PVD \circ \TV$
so that to establish the claim we need to analyze the three functors used here. First, notice that $\TV$ is a polynomial functor, being in particular a homogeneous linear functor. Moreover, we are given that $\pd$ is a homogeneous polynomial functor of degree $d$ and the induced functor $\PVD$ still retains this property. Finally, $\HVD$ is a homogeneous linear functor being the restriction to the $\glv$-invariant component of the homogeneous linear functor $\HVD = \Hom (\we^d V,$ \----). Thus, the composition of these three functor is a homogeneous polynomial functor of overall degree $d$.

\end{proof}

The above lemma allows us to define for every $\cP_d \in \poly_p$ a new object in $\polyd$, namely $\WVD(\PD)$. Notice that since we defined $\WVD$ as a composition of functors, its effect on morphisms in $\polyd$ (which are natural transformations between polynomial functors) is just the composition of the 
functors $(\HVD)_\star$ and $\TV^\star$.

\subsection{The functor $\WVD$ on Schur functors}

To understand the effect of the functor $\WN$ on $\poly$, we will first study the polynomial functor $\WN(\Sl)$ in $\poly$, for $\Sl$ the Schur functor associated to $\lambda$, a partition of $d$. 

\begin{Lemma}
Let $\lambda$ be a partition of $n$ and $d\geq \dim V$. The polynomial functor $\WN (\Sl)$ is naturally equivalent to $\Slp$.
\end{Lemma}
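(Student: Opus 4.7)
The plan is to unpack the definition so that $\WN(\Sl)(W) = \Hom_{\glv}(\we^d V,\, \Sl(W \otimes V))$, decompose the inner representation via Schur--Weyl duality, and then isolate the $\we^d V$-isotypic component. First I would realize the Schur functor classically as $\Sl(U) = \Hom_{S_d}(V^\lambda, U^{\otimes d})$, where $V^\lambda$ is the Specht module attached to the partition $\lambda \vdash d$. Applying this to $U = W \otimes V$ and using the canonical $S_d$-equivariant identification $(W \otimes V)^{\otimes d} \cong W^{\otimes d} \otimes V^{\otimes d}$ (with $S_d$ acting diagonally on the right-hand side) reduces the problem to understanding the $\GL(W) \times \glv$-structure of $\Hom_{S_d}(V^\lambda, W^{\otimes d} \otimes V^{\otimes d})$.

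Next, under the hypothesis $\dim V \geq d$, Schur--Weyl duality gives the $S_d \times \glv$-decomposition $V^{\otimes d} \cong \bigoplus_{\nu \vdash d} V^\nu \otimes \Sn(V)$, and the analogous $S_d \times \GL(W)$ description on the $W$ side (at the polynomial-functor level) yields
\[
\Sl(W \otimes V) \;\cong\; \bigoplus_{\mu,\nu\,\vdash\, d} g(\lambda,\mu,\nu)\, \Sm(W)\otimes \Sn(V),
\]
where $g(\lambda,\mu,\nu) = \dim_\KK \Hom_{S_d}(V^\lambda, V^\mu \otimes V^\nu)$ is the Kronecker coefficient. Taking $\Hom_{\glv}(\we^d V,\Blank)$ then projects onto the summands with $\Sn(V) = \we^d V = \cS_{(1^d)}(V)$, i.e.\ $\nu = (1^d)$. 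The familiar fact that tensoring a Specht module with the sign representation exchanges $V^\mu$ with $V^{\mu'}$ forces $g(\lambda,\mu,(1^d)) = \delta_{\mu,\lambda'}$, so the surviving sum collapses to $\Slp(W)$.

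The main obstacle I anticipate is ensuring naturality in $W$ at every step so that what emerges is an equivalence of polynomial functors and not merely a pointwise isomorphism; each identification (the Specht-module presentation of $\Sl$, the canonical swap turning $(W \otimes V)^{\otimes d}$ into $W^{\otimes d}\otimes V^{\otimes d}$, Schur--Weyl duality, and the extraction of the $\we^d V$-isotypic piece) must be packaged as a natural transformation in the variable $W$. Keeping track of the assumption $\dim V \geq d$ at exactly the one spot where it is needed---namely to guarantee that every partition $\nu \vdash d$, and in particular $\nu=(1^d)$, actually contributes to the Schur--Weyl decomposition of $V^{\otimes d}$---also deserves explicit mention. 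An alternative cleaner route would simply verify this pointwise identification and then invoke the fact from the preamble that morphisms between Schur functors are determined by their values on a single vector space of sufficiently large dimension, obviating any separate naturality check.
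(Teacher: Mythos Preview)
Your proposal is correct and takes essentially the same approach as the paper: both decompose $\Sl(W\otimes V)$ as $\bigoplus_{\mu,\nu}\Sm(W)\otimes\Sn(V)$ with Kronecker-coefficient multiplicities, then use $g(\lambda,\mu,(1^d))=\delta_{\mu,\lambda'}$ to isolate the $\we^d(V)$-isotypic piece. The only difference is that you derive this decomposition explicitly via the Specht-module realization and Schur--Weyl duality, whereas the paper simply states the Kronecker-coefficient formula and the value $a_{\lambda,\lambda',(1^d)}=1$ as known facts; your treatment of naturality and of where the hypothesis $\dim V\geq d$ enters is also more careful than the paper's.
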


\begin{proof}
We have already showed that $\WN (\Sl)$ is a homogeneous polynomial functor of degree $d$. Notice that the functor $ \Sl \TV  = \Sl (\ten)$ can be decomposed using  the following formula

 \[ \Sl ( \ten) = \bigoplus ( \cS_{\mu} ( \blank ) \otimes \cS_{\nu} (V))^{a_{\lambda,\mu,\nu}} = \ldots \oplus \Slp(\blank) \otimes \we^d (V) \oplus \ldots ,\]
where $a_{\lambda,\mu,\nu}$ is the Kronecker coefficient (the tensor product multiplicity for the corresponding representations of the symmetric group).
We notice that in this decomposition the isotypic component of $\we^d(V)$ is given by $\Slp(\blank) \otimes \we^d (V)$
corresponding to the Kronecker coefficient $a_{\lambda,\lambda',(1^d)}=1$.
Consider now the effect of the functor $\HVD$. Only the image of the isotypic component of $\we^d (V)$ will be non-zero. In particular, 
\[ \HVD (\Slp(\blank) \otimes \we^n (V)) \cong( \we^n (V)^* \otimes \Slp(\blank) \otimes \we^n (V))^{\GL(V)} \cong \Slp(\blank), \]
where all the isomorphisms are natural equivalences. 
\end{proof}

Notice that in the above proof, we studied the image of $\WVD (\Sl)$ by examining the isotypic component of $\we^d (V)$ in $\Sl(\ten)$. When we apply the functor $\WN$, we will often use this computational approach to understand its effect on polynomial functors. In particular, one can use this approach to show that $\WN$ behaves well with respect to direct sums. The proof of the following lemma is left to the reader.
\begin{Lemma}
For polynomial functors $\cP_d$ and $\cP_d$ the functors $\WN(\cP_d\oplus \cP_d')$ and $\WN(\cP_d)\oplus \WN(\cP_d')$ are naturally equivalent.
\end{Lemma}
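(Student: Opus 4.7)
The plan is to unpack the definition $\WN = \HVD \circ (-)_V \circ \TV$ on the direct sum $\cP_d \oplus \cP_d'$ and check that each of the three constituent operations preserves direct sums up to a natural equivalence.

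First, the functor $\TV \colon W \mapsto W \otimes V$ is additive: the canonical isomorphism $(W \oplus W') \otimes V \cong (W \otimes V) \oplus (W' \otimes V)$ is natural in $W$ and $W'$, and it is $\glv$-equivariant because $\glv$ acts only on the $V$-factor. Second, by the very definition of the direct sum in $\poly$, we have $(\cP_d \oplus \cP_d')(X) = \cP_d(X) \oplus \cP_d'(X)$ with block-diagonal action on morphisms; tracing this through Lemma~\ref{PV} shows that the induced functor $(\cP_d \oplus \cP_d')_V$ coincides with $\PVD \oplus \PPVD$ as a functor on $\rep$. Third, $\HVD = \Hom_{\glv}(\we^d V, -)$ is additive in its argument because $\Hom$ is additive in the second variable and finite direct sums in $\rep$ are biproducts.

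Composing these three observations yields, for every $W \in \Obj(\ve)$, natural isomorphisms
$$\WN(\cP_d \oplus \cP_d')(W) \cong \HVD\bigl(\PVD(W\otimes V) \oplus \PPVD(W\otimes V)\bigr) \cong \WN(\cP_d)(W) \oplus \WN(\cP_d')(W).$$
All of these isomorphisms are natural in $W$ and so patch together into a natural equivalence of polynomial functors. An equally valid alternative would be to invoke the previous lemma: decompose both $\cP_d$ and $\cP_d'$ as direct sums of Schur functors $\Sl$, apply $\WN(\Sl) \simeq \Slp$ term by term, and observe that the reassembly is natural; but the direct bookkeeping argument above does not require this detour.

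There is essentially no obstacle here: the whole statement reduces to additivity of tensor product, of $\Hom_{\glv}$, and of the $(-)_V$ construction. The only point that deserves a sentence of care is checking that the additivity isomorphism for $\TV$ respects the $\glv$-action, so that the output genuinely lives in $\rep$ rather than only in $\ve$; this is immediate from the fact that the $\glv$-action on $W \otimes V$ is induced purely from its action on $V$.
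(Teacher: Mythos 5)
Your proof is correct, but its primary argument follows a genuinely different route from what the paper intends. The sentence immediately preceding the lemma ("one can use this approach to show\ldots") signals that the intended proof is the one you relegate to an aside: decompose $\cP_d$ and $\cP_d'$ into direct sums of Schur functors, apply the preceding lemma $\WN(\Sl) \cong \Slp$ term by term, and reassemble, using semisimplicity of $\poly_d$. Your main argument instead works formally from the definition $\WN = \HVD \circ (\blank)_{V,d} \circ \TV$: since $(\cP_d\oplus\cP_d')(U)=\cP_d(U)\oplus\cP_d'(U)$ by the definition of $\oplus$ in $\poly$, and $\HVD=\Hom_{\glv}(\we^d V,\blank)$ is additive, one gets a natural splitting of $\WN(\cP_d\oplus\cP_d')(W)=\Hom_{\glv}(\we^d V,\cP_d(W\otimes V)\oplus\cP_d'(W\otimes V))$ directly. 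This is more elementary and conceptually cleaner, since it shows the compatibility with direct sums is a purely formal consequence of the construction rather than something read off from the effect on $\Sl$'s. One small point: your first observation, that $\TV$ sends $W\oplus W'$ to $(W\otimes V)\oplus(W'\otimes V)$, is true but plays no role here — the direct sum being distributed is in the polynomial-functor argument $\cP_d\oplus\cP_d'$, not in the source vector space $W$ — so only your second and third observations are load-bearing.
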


Moreover, we have that $\WN$ behaves well with respect to tensor products.
\begin{Lemma}
Let $\lambda$ and $\mu$ be partitions of $d$ and $e$ respectively. We have that $\Omega_{V,d+e} (\Sm \otimes \cS_{\nu})$ is naturally equivalent to $\Omega_{V,d} (\Sm) \otimes \Omega_{V,e} (\cS_{\nu}) $ if $\dim V\geq d+e$.
\end{Lemma}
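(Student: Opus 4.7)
The plan is to reduce the claim to the classical symmetry $c^\lambda_{\mu\nu} = c^{\lambda'}_{\mu'\nu'}$ of Littlewood--Richardson coefficients, by combining the preceding two lemmas. First, I would decompose the tensor product of polynomial functors via Littlewood--Richardson,
\[ \cS_\mu \otimes \cS_\nu \cong \bigoplus_{\lambda \vdash d+e} c^\lambda_{\mu\nu}\, \cS_\lambda, \]
which is a natural equivalence in $\poly_{d+e}$.

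Then, applying $\Omega_{V,d+e}$ and using additivity together with the Schur-functor identification $\Omega_{V,d+e}(\cS_\lambda) \cong \cS_{\lambda'}$ (valid because $\dim V \geq d+e$), the left-hand side of the claim becomes
\[ \Omega_{V,d+e}(\cS_\mu \otimes \cS_\nu) \cong \bigoplus_{\lambda \vdash d+e} c^\lambda_{\mu\nu}\, \cS_{\lambda'}. \]
For the right-hand side, the Schur-functor lemma applied to each factor gives $\Omega_{V,d}(\cS_\mu) \otimes \Omega_{V,e}(\cS_\nu) \cong \cS_{\mu'} \otimes \cS_{\nu'}$, and re-decomposing this via Littlewood--Richardson, then re-indexing by $\rho = \lambda'$, yields $\bigoplus_\lambda c^{\lambda'}_{\mu'\nu'}\, \cS_{\lambda'}$. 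The two expressions match term by term because of the LR symmetry under simultaneous conjugation of all three partitions.

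The main obstacle I anticipate is ensuring that the identifications above are genuinely natural as polynomial functors, not merely pointwise isomorphisms at each vector space. In practice, this reduces to observing that the Littlewood--Richardson decomposition of $\cS_\mu \otimes \cS_\nu$ (and of $\cS_{\mu'} \otimes \cS_{\nu'}$) is the canonical one coming from Schur--Weyl duality, so that the resulting equivalence lives in $\poly_{d+e}$. If one prefers to avoid any appeal to the LR symmetry, the alternative is to repeat the isotypic-component calculation of the previous lemma directly on $\cS_\mu(W\otimes V)\otimes \cS_\nu(W\otimes V)$: a short Pieri computation shows that the $\we^{d+e}(V)$-isotypic component of $\cS_\beta(V) \otimes \cS_\delta(V)$ with $|\beta|=d$, $|\delta|=e$ is one-dimensional precisely when $\beta=(1^d)$ and $\delta=(1^e)$, and then the Kronecker-coefficient analysis of the previous lemma applied to each factor forces $\alpha = \mu'$ and $\gamma = \nu'$, again producing $\cS_{\mu'} \otimes \cS_{\nu'}$.
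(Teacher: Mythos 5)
Your proof is correct and follows essentially the same route as the paper: decompose $\cS_\mu\otimes\cS_\nu$ by Littlewood--Richardson, apply additivity of $\Omega$ and the identification $\Omega_{V,k}(\cS_\lambda)\cong\cS_{\lambda'}$, and match the two sides via the conjugation symmetry $c^\lambda_{\mu\nu}=c^{\lambda'}_{\mu'\nu'}$. Your added remark on naturality (that equality of Schur-functor multiplicities suffices because $\poly_{d+e}$ is semisimple with $\Hom(\cS_\lambda,\cS_\mu)=0$ for $\lambda\neq\mu$) is a welcome clarification of a point the paper leaves implicit.
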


\begin{proof}
Recall that an application of the Littlewood Richardson rule gives that:  
\begin{align*}
\Sm  \otimes  \cS_{\nu}  & =  \bigoplus_\lambda \Sl^{\clw}  \\
& = \bigoplus_\lambda \Sl^{c_{\mu' \nu'}^{\lambda'}} ,
\end{align*}
by the properties of the Littlewood Richardson coefficients. Applying $\Omega_{V,d+e}$ to this equations, we get that 
\[ \Omega_{V,d+e}(\cS_\mu  \otimes  \cS_{\nu} ) \cong  \bigoplus_\lambda \Omega_{V,d+e}(\Sl)^{c_{\mu' \nu'}^{\lambda'}}  \cong \bigoplus_\lambda \Slp^{c_{\mu' \nu'}^{\lambda'}}. \] 
On the other hand, we have that 
\begin{align*}
\Omega_{V,d} (\Sm) \otimes \Omega_{V,e} (\cS_{\nu}) \cong \Smp  \otimes  \cS_{\nu'}  & =  \bigoplus_\lambda \Slp^ {c_{\mu' \nu'}^{\lambda'}} .
\end{align*} 
 As $\Omega_{d+e} (\Sm \otimes \cS_{\nu})$ and $\Omega_d (\Sm) \otimes \Omega_e (\cS_{\nu}) $  have the same direct sum decomposition in terms of Schur functors, they are naturally equivalent polynomial functors.

\end{proof}

\subsection{The functor $\WV$ on $\poly$}
 
So far we have seen the effect of $\WN$ on Schur functors, on direct sums, and on tensor products. Using the graded structure of $\poly$, we can define $\WV$.
\begin{Definition}
Let $ \cP$ be an object in the category $\poly$. We can decompose $\cP$ in its graded pieces i.e., $ \cP = \bigoplus \pd$, where $\pd$ is a homogeneous polynomial functor of degree $d$. We define 
\[ \WV(\cP) = \bigoplus_d \WN (\pd) \]
\end{Definition}

However, for any homogeneous polynomial functor of degree $d$, we can choose a natural equivalence so that
\[ \pd \cong \bigoplus_{\lambda \dashv d} \cS_{\lambda}^{m_{\lambda}}, \]
for some integers $m_{\lambda}$. Then, using the previous results, we obtain that 
\[ \WN(\pd) \cong \WN(\bigoplus \cS_{\lambda}^{m_{\lambda}}) \cong \bigoplus \cS_{\lambda'}^{m_{\lambda}}. \]
In fact, we will actually often just think of the functor $\WN$ on $\poly_d$ in terms of its effect on Schur functors. Moreover, we use this point of view to compute the effect of $\WV$ on $\cP \in \Obj(\poly)$:
\[ \WV (\cP) = \bigoplus_d \WN (\pd) \cong \bigoplus_d \bigoplus_{\lambda \dashv d} \cS_{\lambda'}^{m_{\lambda}} .\]

Finally, we want to show that our definition is independent of the choice of $V$ i.e., if $V'$ is another vector space of dimension $m \geq d$, then for any $\pd \in \poly_d$ the functors $\WN(\pd)$ and $\W_{V',d}(\pd)$ are naturally equivalent.
\begin{Lemma}
Let $V,V'$ be two vector spaces of dimensions $n,m$, respectively, such that $n,m \geq d$. For every $\pd \in \poly_d$ we have that $\WN(\pd)$ and $\WVPD(\pd)$ are naturally equivalent functors. 
\end{Lemma}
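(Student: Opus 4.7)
The plan is to reduce to the Schur-functor case by using the standard decomposition of polynomial functors and the lemmas just established about how $\Omega_{V,d}$ interacts with direct sums and Schur functors.

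First, I would invoke the classical fact (recorded in Section 2, following Macdonald) that every $\cP_d \in \polyd$ admits a natural equivalence
\[ \cP_d \cong \bigoplus_{\lambda \vdash d} \cS_\lambda^{m_\lambda} \]
for uniquely determined multiplicities $m_\lambda$. This reduces the problem to proving the statement for $\cP_d = \cS_\lambda$ with $\lambda \vdash d$: indeed, by the lemma that $\WN$ respects direct sums, we have natural equivalences
\[ \WN(\cP_d) \cong \bigoplus_{\lambda \vdash d} \WN(\cS_\lambda)^{m_\lambda}, \qquad \WNp(\cP_d) \cong \bigoplus_{\lambda \vdash d} \WNp(\cS_\lambda)^{m_\lambda}, \]
and a term-by-term natural equivalence on the right will assemble into the desired natural equivalence on the left.

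Second, I would apply the earlier lemma showing that $\WN(\cS_\lambda) \cong \cS_{\lambda'}$ whenever $\dim V \geq d$. Since the hypotheses $n \geq d$ and $m \geq d$ both hold, this lemma applies to both $\WN(\cS_\lambda)$ and $\WNp(\cS_\lambda)$, yielding natural equivalences
\[ \WN(\cS_\lambda) \;\simeq\; \cS_{\lambda'} \;\simeq\; \WNp(\cS_\lambda). \]
Composing these two natural equivalences produces a natural equivalence $\WN(\cS_\lambda) \simeq \WNp(\cS_\lambda)$, and combining with the direct-sum step above yields the desired natural equivalence $\WN(\cP_d) \simeq \WNp(\cP_d)$.

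The main subtlety — and the only place one has to be careful — is to check that the intermediate natural equivalence through $\cS_{\lambda'}$ is genuinely canonical enough to give a natural transformation, not just an abstract isomorphism. This is ensured by Schur's lemma in $\poly$, which guarantees $\Hom(\cS_{\lambda'},\cS_{\lambda'}) = \KK$, so the natural equivalence $\WN(\cS_\lambda) \simeq \cS_{\lambda'}$ is unique up to a nonzero scalar; choosing any nonzero representative on each side produces the composite natural equivalence with no further choices needed. All other steps are formal manipulations with the already established properties of $\Omega_{V,d}$, so the whole argument is quite short once the Schur-functor case and the compatibility with direct sums are in hand.
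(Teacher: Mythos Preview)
Your proposal is correct and follows essentially the same approach as the paper: decompose $\cP_d$ into Schur functors, use compatibility of $\Omega_{V,d}$ with direct sums, and invoke the lemma $\Omega_{V,d}(\cS_\lambda)\cong \cS_{\lambda'}$ on each summand to pass through the common target $\bigoplus \cS_{\lambda'}^{m_\lambda}$. The paper simply packages the resulting composite as an explicit chain of natural equivalences displayed in a diagram, but the content is the same as what you wrote.
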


\begin{proof}
Let $\pd \in \poly_d$. Then there exists a natural equivalence $\psi : \pd \to \bigoplus \Sl^{\ml}$.
By our definition of $\WN,\WVPD$  on $\Hom(\poly_d)$, we have that $\WN(\psi), \WVPD(\psi)$ are natural equivalences in $\Hom(\poly_d)$. Moreover, recall that $\WN(\bigoplus \Sl^{\ml})\cong \bigoplus \Slp^{\ml}$, by our results on the effect of $\WN$ on Schur functors. Let us call this natural equivalence $\phi$. Similarly, there exist a natural equivalence $\phi': \WVPD(\bigoplus \Sl^{\ml}) \to \bigoplus \Slp^{\ml}$. We will define $\eta_{\pd} : \WN(\pd) \to \WVPD(\pd)$ to be 
\[ \eta_{\pd} = \WVPD(\psi)^{-1} \circ \phi'^{-1} \circ \Id \circ \phi \circ \WN(\psi), \] 
or the top horizontal map in the following commuting diagram: 
\[
\begin{tikzcd} [column sep = 1 in, row sep = large]
 \WN(\pd) \arrow[r, "\eta_{\pd}"] \arrow[d, "\WN(\psi)"] & \WNp(\pd)  \\
 \WN(\bigoplus \Sl^{\ml}) \arrow[d, "\phi"] &  \WNp(\bigoplus \Sl^{\ml}) \arrow[u, "\WNp(\psi)^{-1}" ] \\
  \bigoplus \Slp^{\ml} \arrow[r, "\Id"]&   \bigoplus \Slp^{\ml} \arrow[u, "\phi'^{-1}"]
\end{tikzcd}  \quad .\]
In conclusion, notice that since $\eta_{\pd}$ is a composition of natural equivalences in $\Hom(\poly_d)$, it is itself a natural equivalence in $\Hom(\poly_d)$.
\end{proof}

As two vector spaces of dimension greater than $d$ yield naturally equivalent functors $\WN(\pd)$ and $\WNp(\pd)$ in $\poly_d$, for any functor $\cP$ in $\poly$ of degree $d$ (not necessarily homogeneous), we have that choosing vector spaces $V,V'$ of dimension greater than $d$ will yield the naturally equivalent functors $\WV(\cP)$ and $\WVp(\cP)$.

\subsection{The functor $\WV$ and the tensor structure of $\poly$}

Consider two Schur functors $\Sm, \Sn$, where $\mu , \nu$ are partitions of $d$ and $e$, respectively. Using the results from the previous section, we get that: 
\[ \WV(\Sm \otimes \Sn ) = \W_{V, d+e} ( \Sm \otimes \Sn) \cong \W_{V,d}( \Sm) \otimes \W_{V,e}(\Sn) \cong \WV(\Sm) \otimes \WV(\Sn)\]
where $V$ is a vector space of dimension greater or equal to $d+e$.

However, we have not explicitly produced a natural equivalence between these functor. In particular, we have not studied how the functor $\WV$ interacts with the symmetric tensor structure of $\poly$. Recall that for every $\cP, \cP' \in \poly$, there are natural equivalence $s_{\cP,\cP'}, s_{\cP',\cP}$, where 
\[
\begin{tikzcd} [column sep = large, row sep = large]
 \cP \otimes \cP' \arrow[r, "s_{\cP,\cP'}"] & \cP' \otimes \cP \arrow[r, "s_{\cP',\cP}"] & \cP \otimes \cP'
\end{tikzcd}  \quad ,\]
such that $s_{\cP',\cP} \circ s_{\cP,\cP'} = \Id_{ \cP \otimes \cP'}$.
Similarly, there are natural equivalences $s_{\WV(\cP),\WV(\cP')}$, $s_{\WV(\cP'),\WV(\cP)}$ such that 
\[ s_{\WV(\cP'),\WV(\cP)}\circ s_{\WV(\cP),\WV(\cP')} = \Id_{ \WV(\cP) \otimes \WV(\cP')}. \]

The question arising from this set up is whether we can produce a natural equivalence $\psi_{\cP,\cP'}: \WV(\cP) \otimes \WV(\cP')  \to \WV(\cP \otimes \cP') $ compatible with the tensor structure. In practice, we want to determine if the following diagram commutes:
\[
\begin{tikzcd} [column sep = 1 in, row sep = large]
 \WV( \cP \otimes \cP') \arrow[r, "\WV (s_{\cP,\cP'})"] &   \WV( \cP' \otimes \cP)  \\
\WV(\cP) \otimes \WV(\cP') \arrow[u, "\psi_{\cP,\cP'}"] \arrow[r, "s_{\WV(\cP),\WV(\cP')}"] &  \WV(\cP') \otimes \WV(\cP) \arrow[u, "\psi_{\cP',\cP}"]
\end{tikzcd}  \quad .\]

As every polynomial functor is a direct sum of Schur functors, it will be enough to study this diagram for $\cP=\Sl$ and $\cP'= \Sm$, for $\lambda$ a partition of $d$ and $\mu$ a partition of $e$.

\begin{Proposition}
We can define $\psi_{\Sl,\Sm}$ so that the above diagram commutes up to sign $(-1)^{de}$.
\end{Proposition}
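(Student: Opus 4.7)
The plan is to define $\psi_{\Sl,\Sm}$ via the coproduct on the exterior algebra, and then verify the diagram by a direct computation using super-cocommutativity of $\we(V)$. Concretely, at each finite-dimensional $W$, an element $f\otimes g$ of $\WV(\Sl)(W)\otimes\WV(\Sm)(W)$ consists of $\glv$-equivariant maps $f\colon\we^d V\to\Sl(W\otimes V)$ and $g\colon\we^e V\to\Sm(W\otimes V)$. Letting $\Delta_{d,e}\colon\we^{d+e}V\to\we^d V\otimes\we^e V$ denote the $(d,e)$-component of the standard coproduct on $\we(V)$, I would set
\[ \psi_{\Sl,\Sm}(W)(f\otimes g)=(f\otimes g)\circ\Delta_{d,e}, \]
regarded as an element of $\Hom_{\glv}(\we^{d+e}V,\Sl(W\otimes V)\otimes\Sm(W\otimes V))=\WV(\Sl\otimes\Sm)(W)$. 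This is manifestly $\glv$-equivariant and natural in $W$. To see it is a natural equivalence (not just a natural transformation), I would invoke the previous lemma, which guarantees that source and target decompose into the same multiset of Schur functors, together with Schur's lemma and a nonvanishing check on each isotypic component, which follows from the surjectivity of $\Delta_{d,e}$.

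The central algebraic input is the super-cocommutativity identity: for the ordinary flip $\tau\colon\we^d V\otimes\we^e V\to\we^e V\otimes\we^d V$,
\[ \tau\circ\Delta_{d,e}=(-1)^{de}\Delta_{e,d}. \]
This is verified directly on a decomposable wedge $v_1\wedge\cdots\wedge v_{d+e}$ by expanding both sides as signed sums over $(d,e)$- and $(e,d)$-shuffles; the sign reflects the skew-commutativity of the wedge product in $\we(V)$.

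With $\psi$ in hand, the diagram commutes up to sign by a direct chase. Since the symmetric structure on $\gp$ is inherited from $\gv$, the bottom swap $s_{\WV(\Sl),\WV(\Sm)}$ is the plain flip without signs. Following the right-then-up path, $f\otimes g\mapsto g\otimes f\mapsto\psi_{\Sm,\Sl}(g\otimes f)=(g\otimes f)\circ\Delta_{e,d}$. Following the up-then-right path, $f\otimes g\mapsto(f\otimes g)\circ\Delta_{d,e}$, and then $\WV(s_{\Sl,\Sm})$ acts by post-composition with the flip on $\Sl(W\otimes V)\otimes\Sm(W\otimes V)$; using the general identity $s\circ(f\otimes g)=(g\otimes f)\circ\tau$ together with super-cocommutativity, this becomes $(-1)^{de}(g\otimes f)\circ\Delta_{e,d}$. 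The two routes therefore agree up to the sign $(-1)^{de}$, as claimed.

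The hard part is organizational rather than conceptual: one must keep careful track of which flip (on the exterior factors of $V$ versus on the Schur-functor factors evaluated at $W\otimes V$) contributes the sign, and one must confirm that $\psi_{\Sl,\Sm}$ is a natural equivalence by combining the Littlewood--Richardson decomposition, Schur's lemma, and an explicit nonvanishing check on highest-weight vectors of the relevant $\glv$-representations.
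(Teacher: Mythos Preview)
Your proposal is correct and essentially the same as the paper's proof: the paper also defines $\psi_{\Sl,\Sm}(W)(f\otimes g)$ by precomposing $f\otimes g$ with the canonical $\glv$-map $\we^{d+e}V\to\we^d V\otimes\we^e V$ (phrased there as ``restriction to the unique sub-representation $\we^{d+e}V\subseteq\we^d V\otimes\we^e V$''), and the sign is extracted from exactly the identity you call super-cocommutativity, namely that the two embeddings of $\we^{d+e}V$ into $\we^d V\otimes\we^e V$ and $\we^e V\otimes\we^d V$ differ by $(-1)^{de}$ under the unsigned flip. The only cosmetic difference is that the paper checks $\psi$ is an isomorphism by explicitly identifying the multiplicity spaces as $\Slp(W)$ and $\Smp(W)$, whereas you appeal to the earlier Littlewood--Richardson lemma plus Schur's lemma and a nonvanishing check; both arguments are fine.
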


\begin{proof}
First we will define $\psi_{\Sl,\Sm}$ and from our definition we will conclude that the diagram only commutes up to a sign.

As remarked before, $\WVD (\Sl) (W) $ is the multiplicity space of the isotypic component of the irreducible $\glv$-representation $\we^d(V)$ inside $\Sl(\wv)$. Thus, to find a natural equivalence from $\WVD(\Sl)\otimes \WVE(\Sm)$ to $\Omega_{V,d+e}(\Sl \otimes \Sm)$  we need to exhibit for any finite dimensional vector space $W$ an isomorphism from the tensor product of the isotypic component of $\we^d(V)$ in $\WVD(\Sl)(W)$  and the isotypic component of  $\we^e(V)$ in  $\WVE(\Sm)(W)$ to the isotypic component of $\we^{d+e}(V)$ in $\WV(\Sl \otimes \Sm)$.

Going back to the definition of $\WN (\pd)(W)$, we notice that this is equivalent to producing an isomorphism $\phi=\psi_{\Sl,\Sm}(W)$:
\[
\begin{tikzcd} [column sep = 1 in, row sep = large]
\Hom_{\glv} (\we^d (V), \Sl(\wv)) \otimes \Hom_{\glv} (\we^e (V), \Sm(\wv))  \arrow[d, "\phi"] \\
\Hom_{\glv} (\we^{d+e} (V), \Sl(\wv) \otimes \Sm (\wv))
\end{tikzcd}  \quad \]
for every $W$. 

Note that $\bigwedge^d(V)\otimes \bigwedge^e(V)$ has a unique sub-representation isomorphic to $\bigwedge^{d+e}(V)$.
Now we define $\phi$ as follows. If $f:\bigwedge^d(V)\to \Sl(\wv)$ and $g:\bigwedge^e(V)\to \Sm(\wv)$ are $\GL(V)$-equivariant linear maps, 
then we define $\phi(f\otimes g)$ as the restriction of $f\otimes g:\bigwedge^d(V)\otimes \bigwedge^e(V)\to \Sl(\wv)\otimes \Sm(\wv)$
to the sub-representation $\bigwedge^{d+e}(V)\subseteq \bigwedge^d V\otimes \bigwedge^e V$. We can extend $\phi$ to a linear map.
 
By Schur's lemma any $\glv$-equivariant map in $f: \we^d(V) \to \Sl(\wv)$  can be written as 
$$f = \Id \otimes a:\we^d(V)\to \we^d(V)\otimes \Slp(W)\subseteq \Sl(\wv),$$ 
for $a$ some constant map from $\we^d(V)$ to $\Slp(W)$, as $\Slp(W)$ is the multiplicity space of the isotypic component of $\we^d(V)$ in $\Sl(\wv)$. Similarly, letting  $b$ be a constant map to $\Smp(W)$, we will have that $g \in  \Hom_{\glv} (\we^e (V), \Sm(\wv))$ can be written as $g= \Id \otimes b$. As the multiplicity space of $\we^{d+e} (V)$ in  $\Sl(\wv) \otimes \Sm (\wv)$ is precisely $\Slp(W) \otimes \Smp (W)$, we have that $\phi$  sends
 $\Sigma_{i} \Id \otimes a_i \otimes \Id \otimes b_i$ to $ \Sigma_{i} \Id \otimes a_i \otimes b_i$. As this map is an injective map between isomorphic spaces, it is an isomorphism.

In the diagram below:
\[
\begin{tikzcd} [column sep = 1 in, row sep = large]
\we^d (V) \otimes \we^e (V)  \arrow[d, "s_{\we^d (V),\we^e (V)}"] \arrow[r, hookleftarrow] & \we^{d+e} (V)   \\
\we^e (V) \otimes \we^d (V)  \arrow[ur, hookleftarrow] &
\end{tikzcd}  \quad .\]  
the map $s_{\we^d (V),\we^e (V)}$ takes the pure tensor $a\otimes b$ to $(-1)^{de}b \otimes a$. Thus, the diagram only commutes up to sign $(-1)^{de}$.  

So applying $\Hom ( \Blank, \Sl \otimes \Sm (\wv))$ to the diagram above, we obtain a new diagram that only commutes up to sign $(-1)^{de}$.
\[
\begin{tikzcd} 
\Hom( \we^d (V) \otimes \we^e (V), \Sl \otimes \Sm (\wv))  \arrow[r] & \Hom(\we^{d+e} (V), \Sl \otimes \Sm)(W) \\
\Hom( \we^e (V) \otimes \we^d (V), \Sl \otimes \Sm (\wv)) \arrow[u] \arrow[ur] & 
\end{tikzcd}  \quad .\] 
Notice that restricting the spaces above to the relevant invariant subspaces does not affect the sign in the above diagram. 

The map
\[ \WV (s_{\Sm,\Sl})(W) : \WV ( \Sm \otimes \Sl) (W) \to \WV (\Sl \otimes \Sm) (W) \]
is just given by $ \Sigma_{i} \Id \otimes b_i \otimes a_i \mapsto \Sigma_{i} \Id \otimes a_i \otimes b_i$, so that on pure tensors we have that $\Id \otimes b \otimes a \mapsto \Id \otimes a \otimes b$. Finally, consider the diagram below
\[
\begin{tikzcd} [column sep = 1 in, row sep = large]
\WV(\Sl)(W) \otimes \WV(\Sm)(W) \arrow[r, "\phi"] & \WV (\Sl \otimes \Sm)(W) \\
\WV(\Sm)(W) \otimes \WV(\Sl)(W)  \arrow[u,  "s_{\WV(\Sm)(W),\WV(\Sl)(W)}"] \arrow[r, "\phi"] & \WV( \Sm \otimes \Sl)(W) \arrow[u,  "\WV (s_{\Sl,\Sm})(W)"] 
\end{tikzcd}  \quad. \]
We can conclude that the diagram only commutes up to sign $(-1)^{de}$ as first going right and then up sends $\Id \otimes b \otimes \Id \otimes a \mapsto \Id \otimes a \otimes b$, whilst first going up then right send $\Id \otimes b \otimes \Id \otimes a \mapsto (-1)^{de} \Id \otimes a \otimes b$

\end{proof}

\subsection{Definition of $\W(\cP)$}

After examining the effect of $\WN$ on the tensor structure of $\poly$, we want to be able to work with a functor defined independently from the choice of the vector space $V$. For every $i\geq 0$, define $V_i\cong \KK^i$ as the vector space of all sequences $(a_1,a_2,a_3,\dots)\in \KK^\infty$ with
$a_j=0$ for all $j>i$. Let $\rho_{ji}$ be the inclusion $\rho_{ji}: V_i\to V_j$.

Suppose that $i\leq j$. Consider $\Omega_{d,V_j}(\cP_d)$, by definition we have that $\Omega_{d,V_j}(\cP_d)\in \Hom_{\GL(V_j)}(\bigwedge^d V_j,\cP_d(V_j\otimes \blank))$. 
The restriction of $\Omega_{d,V_j}(\cP_d)$ to its subspace $\bigwedge^d V_i$ is $\GL(V_i)$-equivariant and the image is contained in $\cP_d(V_i\otimes \blank)$.
So we have a natural transformation $g_{i,j}:\Omega_{d,V_j}(\cP_d)\to \Omega_{d,V_i}(\cP_d)$.
The kernel of this natural transformation consists exactly of all isotypic components $\cS_{\lambda'}$ where $\lambda$ has more than $i$ parts.
There is a unique splitting $f_{j,i}: \Omega_{d,V_i}(\cP_d)\to \Omega_{d,V_j}(\cP_d)$ such that $g_{i,j}\circ f_{j,i}$ is the identity.
We have a direct system
$$
\xymatrix{ \Omega_{d,V_0}\ar[r]^{f_{0}} &  \Omega_{d,V_1}\ar[r]^{f_{1}} &  \Omega_{d,V_2}\ar[r]^{f_{2}} & \cdots}
$$
where $f_i=f_{i+1,i}$.

\begin{Definition}
 We define $\W_d(\cP_d)$ to be the following direct limit of the maps $f_{j,i}$: 
\[ \W_d(\cP_d) = \varinjlim \W_{d,V_i}(\cP_d) .\]  
\end{Definition}   
Notice that the abelian category of polynomial functor is cocomplete so this functor is well-defined as direct limits exist. Finally, recall our definition of $\WV$ for $\cP \in \poly$ decomposed as $\cP = \bigoplus_d \cP_d$:
\[\W_V (\cP) = \bigoplus_d \W_{V,d} ( \cP_d). \]

Similarly, we have the following definition for $\W$.
\begin{Definition}
Let $\cP \in \poly$ be decomposed as $\cP = \bigoplus_d \cP_d$. We define $\W(\cP)$ to be: 
\[ \W(\cP) = \bigoplus_d \W_d ( \cP_d) .\]  
\end{Definition}

We can notice that our definition of $\WV$ is compatible with our definition of $\W$:
\[\W(\cP) = \bigoplus_d \W_d ( \cP_d) = \bigoplus_d \varinjlim \W_{d,V_i}(\cP_d) = \varinjlim  \bigoplus_d \W_{d,V_i}(\cP_d) = \varinjlim \W_{V_i}(\cP),\]
as direct sums and direct limits commute.

\subsection{The functor $\W$ on $\gp$}

Recall that the category $\gp$ is a functor category where each functor $\cF \in \gp$ can be decomposed as 
\[ \cF = \bigoplus \cF_d \]
with $\cF_d$ a homogeneous polynomial functor of degree $d$. Moreover, recall that each polynomial functor is naturally equivalent to a direct sum of $\Sl$'s. Thus we can also notice that 
\[ \cF \cong \bigoplus \Sl^{\ml} , \]
for $\lambda$'s of any length, but with the stipulation that for each polynomial functor $\cF_d$ and any $W \in \Obj(\ve)$, we have that $\cF_d (W) \cong \bigoplus_{\lambda \dashv d} \Sl^{\ml} (W)$ is a finite dimensional vector space. 

In the previous section we have seen how to define $\W$ on any polynomial functor. We can extend this definition to any direct sum of polynomial functors, even though the direct sum itself may not be a polynomial functor. Thus, we can define the functor $\W$ on the category $\gp$. In particular, we have that $\cS$, the symmetric algebra functor, is not a polynomial functor because for any $W \in \Obj(\ve)$  we have that $\cS(W)$, the symmetric algebra on $W$, is an infinite dimensional vector space. However, $\cS$ is a direct sum of polynomial functors as each graded piece $\cS^d(W)$ is a finite dimensional vector space. Thus $\cS$ is an object in $\gp$. 

\begin{Definition}  
For any functor $\cF \in \Obj( \gp)$, where $\cF= \bigoplus \cF_d$, we let
\[ \W(\cF) = \bigoplus \W_d (\cF_d) . \] 
\end{Definition}

In particular, for $\cS = \bigoplus \cS_(d) $, we have that 
\[\W (\cS) = \bigoplus \W_d(\cS_(d)) \cong \bigoplus \we^d = \we \] 
from our results on the effect of $\W$ on Schur functors.

\section{Resolutions in $\gp$}

\subsection{Equivariant resolutions}

Let us fix a vector space $U$ of dimension $n$. The ring of polynomial functions on $U$ can be identified with the symmetric algebra $S(U^*)$. Let $R = S(U^*)$ and let $\mathfrak{m}$ be the homogeneous maximal ideal in $R$. Given a module $M$ over $R$ we can construct a minimal resolution by defining $D_0 := M$ and $E_0 := D_0/ \mathfrak{m}D_0$. We can then extend in a unique way the homogeneous section $\phi_0:E_0\to D_0$ of the homogeneous quotient map $\pi_0:D_0\to E_0$ to a $R$-module homomorphism $\phi_0: R \otimes E_0 \to D_0$. The tensor product $R \otimes E_0$ is naturally graded as a tensor product of graded vector spaces and $\phi_0$ is homogeneous with respect to this grading.  Letting $D_1$ be the kernel of $\phi_0$, we see how to proceed inductively to construct a free resolution of $M$. The resolution is finite by Hilbert's syzygy theorem which states that $D_i=0$ for $i>n$. In the resulting minimal free resolution:
\[ 0 \to R \otimes E_t \to \cdots \to R \otimes E_0 \to M \to 0, \]
we can naturally identify $E_i$ with $\Tor_i(M, \KK)$. 

Moreover, suppose that $U$ is a representation of a linearly reductive algebraic group $G$. If $G$ also acts on the module $M$ and the multiplication map $m: R \times M \to M$ is $G$-equivariant, then each graded piece $M_d$ of $M= \bigoplus M_d$ is a $G$-module. By linear reductivity, we can choose the maps $\phi_i$ to be $G$-equivariant giving each $E_i$ the structure of a graded $G$-module. In particular, we can choose a decomposition of each $E_i$ into irreducible $G$-representations. In the case of $G = \glv$ and $M$ a polynomial representation of $G$, we will have a decomposition of each $E_i$ in the equivariant resolution of $M$ in terms of the Schur functors $\Sl$'s. As a result we have the following equivariant resolution of the $G$-module $M$:
\[ 0 \to R \otimes \bigoplus \Sl(V)^{m^n_{\lambda}} \to \cdots \to  R \otimes \bigoplus \Sl(V)^{m^0_{\lambda}} \to M \to 0. \]  
In particular, we have that $M/ \mathfrak{m}M = \Tor_0(M, \KK)$ can be decomposed in irreducible $\glv$-representations as $\bigoplus \Sl(V)^{m^0_{\lambda}}$.

\subsection{The functor $\boldsymbol{\W}$ and resolutions in $\gp$}
Consider a module functor $\cM$ in $\gp$ over the algebra functor $\cR$ in $\gp$. A resolution for $\cM$ is constructed analogously to a resolution for a $\glv$-module.  In particular, the construction relies on the fact that $\gp$ is a semisimple category as each object is naturally equivalent to a direct sum of simple objects: the irreducible polynomial functors $\Sl$'s. 

The object $\cM$  in $\gp$ is equipped with a natural equivalence
\[ \cM \cong \bigoplus \Sl \otimes A_{\lambda}, \]
where $A_{\lambda}$ is the multiplicity space of $\Sl$, a vector space recording the multiplicity of the polynomial functor $\Sl$.

Let $\cN$ be another $\cR$-module functor, where $\cN \cong \bigoplus \Sl \otimes B_{\lambda}$. We have that the map $\psi: \cM \to \cN$ can be viewed as a map 
\[ \bigoplus \Sl \otimes A_{\lambda} \to  \bigoplus \Sl \otimes B_{\lambda} \]
and being a homomorphism in $\gp$, we have that $\psi = \bigoplus Id \otimes \psi_{\lambda}$, where each $\psi_{\lambda}: A_\lambda \to B_\lambda$ is just a linear map. Thus each $\psi_\lambda$ has a section $\phi_\lambda : B_\lambda \to A_\lambda$ allowing us to construct $\phi := \bigoplus Id \otimes \phi_{\lambda}:\cN\to\cM$, a section of $\psi$. 

As $\gp$ is an abelian category for each $\phi \in \Hom( \gp)$, there exists an object $\cK \in \Obj( \gp)$ which is the kernel of $\phi: \cN \to \cM$.

To construct the minimal resolution of $\cM$, we will consider the map $\psi_0 : (\cM/ \mm \cM) \otimes \cR \to \cM$, where $\mm$ is the positively graded part of $\cR \in \gp$, the maximal homogeneous module functor of $\cR$. 
We will define $\cD_0:=\cM$ and $\cE_0 := \cM/ \mm \cM $. Then $\cD_1:= \ker (\phi_0)$, where $\phi_0$ is the $\cR$-module functor morphism arising from a section of $\psi_0$ as discussed above. Inductively, we define $\cD_i = \ker \phi_{i-1}$ and $\cE_i = \cD_i / \mm \cD_i$. Then we will let $\phi_i : \cE_i \to \cD_i$ be the section of the quotient map $\psi_i : \cD_i \to \cE_i$ and we will extend $\phi_i$ to a $\cR$-module functors map $\phi_i : \cR \otimes \cE_i \to \cD_i$. As a result we obtain the following minimal resolution for $\cM$:
\[ \cdots \to \cR \otimes \bigoplus \Sl^{m^i_{\lambda}} \to \cdots \to \cR \otimes \bigoplus \Sl^{m^0_{\lambda}} \to \cM \to 0, \] 
where $\cE_i \cong \bigoplus \Sl^{m^i_{\lambda}}$.

Notice that for $W \in \Obj(\ve)$ and $\cR= \cS(W)$, a minimal resolution for $\cM(W)$ will terminate as by Hilbert's syzygy theorem $\cD_i = 0$ when $i > \dim(W)$.  However, the categorical construction of the resolution of $\cM$ may be infinite.

Given a minimal resolution of $\cM$ constructed as above, we can apply $\W$ to the resolution to obtain
\[ \cdots \to \W(\cR) \otimes \bigoplus \Slp^{m^i_{\lambda}} \to \cdots \to \W(\cR) \otimes \bigoplus \Slp^{m^0_{\lambda}} \to \W(\cM) \to 0.\]
In particular, notice that if $\cR=\cS$, the symmetric algebra functor, then applying $\W$ to a resolution of the $\cS$-module functor $\cM$ will result in a resolution of $\W(\cM)$, a module over the algebra functor $\W(\cR) \cong \we$. 
 
\section{Castelnuovo-Mumford regularity of modules in $\gp$}
For a finite dimensional graded $\KK$-vector space $E = \bigoplus E_d$, we define
\[ \deg(E):= \max \{ d : E_d \neq 0\}.\]
If $E=\{0\}$, then we define $\deg(E)= - \infty$. For $\cM, \cR$ in $\Obj(\gp)$, let $\cM$ be a module functor over the algebra functor $\cR$. For every $V$ in $\Obj(\ve)$ we have that $\cM(V)$ is a module, in the usual sense, over the $\KK$-algebra $\cR(V)$. We have that $\cM(V)$ is $s$-regular if $\deg(\Tor_i(M(V), \KK)) \leq s+i$, for all $i$. In particular, notice that using the minimal resolution constructed above, we have that $ \Tor_i(\cM(V), \KK)) = \cE_i(V)$. Thus, we have that $\cM(V)$ is $s$-regular if
\[ \deg(\cE_i(V)) \leq s+i, \] 
for all $i$. 

The \emph{Catelnuovo-Mumford regularity} $\reg(\cM(V))$ of $\cM(V)$ is the smallest integer $s$ such that $\cM(V)$ is $s$-regular. We define the \emph{regularity} of $\cM \in \Obj(\gp)$ to be 
\[ \lim_{\dim(V)\to \infty} \reg(\cM(V)) \]
if the limit exists. 

\begin{Proposition}\label{keyprop}
Let $\cM$ be a module over $\cR$ in $\gp$ with regularity $d$. Then $\W(\cM)$ is a module over $\W(\cR)$ with regularity $d$.
\end{Proposition}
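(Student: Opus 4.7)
The plan is to take the minimal equivariant resolution of $\cM$ over $\cR$ constructed in Section~7, apply $\W$ term by term, and verify that the result is the minimal equivariant resolution of $\W(\cM)$ over $\W(\cR)$ with the same degree profile. The essential input is that $\W(\Sl)\cong \Slp$ and $|\lambda'|=|\lambda|$: transposition preserves the number of boxes, so the homogeneous degree of each graded summand is unchanged, while the homological index is also unchanged.

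Concretely, starting from
\[ \cdots \to \cR\otimes \cE_i \to \cdots \to \cR\otimes \cE_0 \to \cM \to 0, \qquad \cE_i\cong \bigoplus_\lambda \Sl^{m^i_\lambda}, \]
I would apply $\W$ and invoke the natural equivalence $\W(\cR\otimes \cE_i)\cong \W(\cR)\otimes \W(\cE_i)$ from the tensor compatibility result (the sign $(-1)^{de}$ does not affect whether a map is zero, so it is harmless for the argument) to obtain
\[ \cdots \to \W(\cR)\otimes \W(\cE_i) \to \cdots \to \W(\cR)\otimes \W(\cE_0) \to \W(\cM) \to 0, \]
with $\W(\cE_i)\cong \bigoplus_\lambda \Slp^{m^i_\lambda}$. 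On direct sums of Schur functors $\W$ acts by relabelling $\Sl\mapsto \Slp$ while preserving multiplicity spaces, so $\W$ is exact and the above complex is a resolution. Since $\W$ sends the maximal ideal functor $\mm\subset \cR$ (the positive-degree part) to the maximal ideal functor of $\W(\cR)$, minimality is preserved, and $\W(\cE_i)$ computes $\Tor_i^{\W(\cR)}(\W(\cM),\KK)$. For every $V$ with $\dim V$ sufficiently large, each $\Sl$ and each $\Slp$ appearing is nonzero on $V$, so $\deg(\W(\cE_i)(V))=\max\{|\lambda|:m^i_\lambda\neq 0,\ \lambda_1\leq \dim V\}$ and $\deg(\cE_i(V))=\max\{|\lambda|:m^i_\lambda\neq 0,\ \ell(\lambda)\leq \dim V\}$ both stabilize to the same value $\max\{|\lambda|:m^i_\lambda\neq 0\}$. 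Hence $\reg(\W(\cM)(V))=\max_i(\deg(\W(\cE_i)(V))-i)$ converges to the same limit as $\reg(\cM(V))$, namely $d$.

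The main technical obstacle is establishing that the resolution obtained after applying $\W$ is genuinely minimal, i.e.\ that its differentials carry no scalar component. This reduces to checking that $\W$ sends the quotient sequence $0\to \mm \to \cR \to \KK \to 0$ to the analogous sequence for $\W(\cR)$ and commutes with the relevant tensor products, which follows from the explicit description of $\W$ on Schur functors and the tensor compatibility proposition. A secondary point is to justify that the limit defining $\reg$ exists on both sides: since $\deg(\cE_i(V))$ is non-decreasing in $\dim V$ and bounded above by $\max\{|\lambda|:m^i_\lambda\neq 0\}$, it stabilizes, and the identical argument applies to $\W(\cE_i)(V)$ with the same stable value, so the two limits agree and equal $d$.
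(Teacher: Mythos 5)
Your proof is correct and follows the same essential strategy as the paper's: apply $\W$ to the minimal equivariant resolution of $\cM$ in $\gp$, observe that $\W$ sends $\cE_i\cong\bigoplus_\lambda\Sl^{m^i_\lambda}$ to $\bigoplus_\lambda\Slp^{m^i_\lambda}$ with $|\lambda'|=|\lambda|$, and deduce that $\deg(\cE_i(V))$ and $\deg(\W(\cE_i)(V))$ agree once $\dim V$ is large enough, so the limits defining regularity coincide. In fact your treatment is slightly more careful than the paper's on one point: the paper asserts the inequality $\deg(\W(\cE_i)(V))\leq\deg(\cE_i(V))$ for \emph{every} $V$, which fails in general (for the Koszul syzygy $\cE_i=\cS_{(1^{i+1})}$ and small $\dim V$ one gets $\cE_i(V)=0$ while $\W(\cE_i)(V)=\cS_{(i+1)}(V)\neq 0$); your formulation via the two conditions $\ell(\lambda)\leq\dim V$ versus $\lambda_1\leq\dim V$, both relaxing to the same stable value $\max\{|\lambda|:m^i_\lambda\neq 0\}$, avoids this and is what the argument actually needs. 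Your explicit appeals to exactness of $\W$, to $\W(\cR\otimes\cE_i)\cong\W(\cR)\otimes\W(\cE_i)$, and to the fact that $\W$ carries $\mm\subset\cR$ to $\mm\subset\W(\cR)$ (hence preserves minimality) are implicit in the paper but correctly spelled out here.
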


\begin{proof}
As the module $\cM$ in $\gp$ comes equipped with a multiplication map $\nu: \cR \otimes \cM \to \cM$, we have that $\W(\nu): \W(\cR) \otimes \W(\cM) \to \W(\cM)$ equips $\W(\cM)$ with the structure of a $\W(\cR)$-module. Consider a minimal resolution for $\cM$ as constructed above:
\[ \cdots \to \cR \otimes \cE_i \to \cdots \to \cR \otimes \cE_0 \to \cM \to 0.\]
We can apply $\W$ to the resolution to obtain
\[ \cdots \to \W(\cR) \otimes \W(\cE_i) \to \cdots \to \W(\cR) \otimes \W(\cE_0) \to \W(\cM) \to 0.\]

Notice that for every vector space $V$, we have that $\deg(\W(\cE_i)(V)) \leq \deg(\cE_i(V))$. In fact, it is possible that $\cE_i(V) \neq 0$ but $\W(E_i)(V) = 0$ for $\dim V \leq i$. However, for $\dim (V)$ large enough, we have that $\deg(\W(\cE_i)(V)) = \deg \cE_i(V)$. Hence for $\dim(V)$ large enough, we also have that $\W(\cE_i)(V) = \Tor_i(\W(\cM)(V), \KK)$. We have that $\cM(V)$ is $s$-regular if $\max_i \{\deg(\cE_i(V))-i\} \leq s$. Thus, for $\dim(V)$ large enough, we have that 
\[ \max_i \{\deg(\W(\cE_i)(V))-i\} = \max_i \{\deg(\cE_i(V))-i\} \leq s,\]
so $\W(\cM)(V)$ is $s$-regular whenever $\cM(V)$ is $s$-regular.

Therefore,
\[ d = \reg(\cM) = \lim_{\dim(V)\to \infty} \reg(\cM(V)) = \lim_{\dim(V)\to \infty} \reg(\W(\cM)(V))= \reg \W(\cM).\]   
\end{proof}

\section{The module functors of a subspace arrangement}
For $Y$ a $\KK$-vector space, a subspace arrangement $\cA = \{ Y_1, \dots , Y_t\}$ is  a collection of linear subspaces in $Y$. The ideal associated to $\cA$ is the vanishing ideal of the subspace arrangement: $I_{\cA} = \II (\cA) = \II (Y_1 \cup \cdots \cup Y_t )$. Moreover, we can define $J_{\cA} = \prod_{i} \II(Y_i) =  \II(Y_1)\II(Y_2)  \cdots \II(Y_t)$

Let $W = Y^*$. Then we have that $I_{\cA}, J_{\cA} $ are ideal in $\sym(Y^*) = \cS (W)$.
\begin{Definition}
Let $V$ be any object in $\ve$ and let $Z= V^*$. In the polynomial ring $\cS(W \otimes V)$ we define $\ia (V)$ to be the vanishing ideal of the subspace arrangement $\cA \otimes Z$, i.e.,
\[ \ia(V) = \II( Y_1\otimes Z \cup \cdots \cup Y_t \otimes Z). \]
Moreover, we define $J_{\cA}$ to be the product ideal
\[ \ja(V) = \II( Y_1\otimes Z) \II( Y_2\otimes Z) \cdots \II(Y_t \otimes Z). \]
\end{Definition}

For any subspace arrangement $\cA$, we can now construct the module functors $\ia, \ja$ in $\gp$ for the algebra functor $\cS(W \otimes \blank)$. For any object $V$ in $\ve$, we have already defined $\ia(V)$ and $\ja(V)$. Notice that for every vector space $V$, we have that $\ia(V), \ja(V)$ are homogeneous ideals in $\cS(\wv)$ so that can define monomorphisms $\ia, \ja \into \cS(W \otimes \blank)$. Thus for every $d$, we have that $(\ia)_d, (\ja)_d $ are polynomial functors of degree $d$ giving $\ia$ and $\ja$ the structure of objects in $\gp$.

To show that $\ia$ and $\ja$ are module functors in $\gp$, as they inherit a multiplication map from $\cS(W \otimes \blank)$, we only need to define $\ia, \ja$ on $\Hom (\ve)$. For ease of notation, will proceed with the definition for $\ia$, but the same construction works for $\ja$. In fact, the reader may substitute $\ja$ for $\ia$ in the following paragraphs without affecting the results. Let $f : V_1 \to V_2$ be in $\Hom ( \ve)$. Then also $Id \otimes f : W \otimes V_1 \to W \otimes V_2$ is in $\Hom (\ve)$, As $\cS( W \otimes \blank)$ is an object in $\gp$, we have $\cS( \Id \otimes f):  \cS( W \otimes V1) \to \cS( W \otimes V_2)$ in $\Hom (\gv)$.

\begin{Proposition}
For any $f: V_1 \to V_2$ in $\Hom(\ve)$, we have that $\ia (f)$ defined as $\cS( \Id \otimes f)|_{\ia(V_1)}$ is an element in $\Hom (\gv)$ such that 
\[ \cS( \Id \otimes f)|_{\ia(V_1)} : \ia(V_1) \to \ia(V_2). \]
Moreover, this shows that $\ia$ is a module functor in $\gp$.
\end{Proposition}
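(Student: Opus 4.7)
The plan is to interpret $\cS(\Id\otimes f)$ geometrically as a pullback and then observe that the corresponding dual map preserves the arrangement. Under the standard identification of $\cS(W\otimes V)$ with the ring of polynomial functions on $(W\otimes V)^*\cong Y\otimes Z$ (where $Z=V^*$), the covariant ring homomorphism $\cS(\Id\otimes f):\cS(W\otimes V_1)\to\cS(W\otimes V_2)$ is precisely pullback along the dual linear map $\Id\otimes f^*:Y\otimes Z_2\to Y\otimes Z_1$. This identification makes it transparent that $\cS(\Id\otimes f)$ is a graded ring homomorphism, which already handles the claim that the restriction is a morphism in $\gv$.

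The crux is the elementary geometric observation that the dual map respects the arrangement. On a simple tensor $(\Id\otimes f^*)(y\otimes z)=y\otimes f^*(z)$, so if $y\in Y_i$ the image still lies in $Y_i\otimes Z_1$; by linearity $(\Id\otimes f^*)(Y_i\otimes Z_2)\subseteq Y_i\otimes Z_1$ for each $i$. Any polynomial vanishing on $\bigcup_i Y_i\otimes Z_1$ therefore pulls back to a polynomial vanishing on $\bigcup_i Y_i\otimes Z_2$, yielding the desired containment $\cS(\Id\otimes f)(\ia(V_1))\subseteq\ia(V_2)$. The analog for $\ja$ is immediate from the same observation together with the fact that $\cS(\Id\otimes f)$ is a ring homomorphism: each factor $\II(Y_i\otimes Z_1)$ is sent into $\II(Y_i\otimes Z_2)$, so the product $\ja(V_1)=\prod_i\II(Y_i\otimes Z_1)$ lands inside $\prod_i\II(Y_i\otimes Z_2)=\ja(V_2)$.

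Once the inclusion is in hand, the rest is formal. Functoriality $\ia(g\circ f)=\ia(g)\circ\ia(f)$ and $\ia(\Id_V)=\Id_{\ia(V)}$ are inherited from the functoriality of $\cS$ and of $\otimes$. Each graded piece $(\ia)_d$ is a subfunctor of the homogeneous polynomial functor $\cS(W\otimes\blank)_d$, and is thus itself a homogeneous polynomial functor of degree $d$, so $\ia$ is an object of $\gp$. The module-functor axioms (identity and associativity for $\nu:\cS(W\otimes\blank)\otimes\ia\to\ia$) reduce to the ordinary ring-module axioms for $\cS(W\otimes V)$ acting on its own ideal $\ia(V)$, with naturality following from naturality of the ring multiplication on $\cS(W\otimes\blank)$. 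No step is genuinely hard; the only care needed is in tracking variances --- remembering that a linear map $W\otimes V_1\to W\otimes V_2$ corresponds to a \emph{reverse} map of affine spaces $Y\otimes Z_2\to Y\otimes Z_1$ --- and in confirming that it suffices to verify $(\Id\otimes f^*)(Y_i\otimes Z_2)\subseteq Y_i\otimes Z_1$ on simple tensors, which is valid since both sides are linear subspaces.
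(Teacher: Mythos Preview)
Your argument is correct and takes a genuinely different route from the paper's. The paper argues algebraically: it notes that $\cS(\Id\otimes f)$ is a ring homomorphism, asserts that $\ia(V)$ is generated in degree one, and then checks that degree-one elements $w\otimes v_1$ with $w\in I_{\cA}$ are sent to $w\otimes f(v_1)\in\ia(V_2)$. You instead dualize: identifying $\cS(\Id\otimes f)$ with pullback along $\Id\otimes f^*:Y\otimes Z_2\to Y\otimes Z_1$, you observe that each $Y_i\otimes Z_2$ maps into $Y_i\otimes Z_1$, so vanishing on the union $\bigcup_i Y_i\otimes Z_1$ is preserved under pullback.

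Your geometric approach is more robust here. The paper's claim that $\ia(V)$ is ``generated in degree one'' is in fact false whenever $t>1$ (e.g.\ two coordinate axes in the plane give $\ia(V)=(x_iy_j)$, generated in degree two), so its argument as written really only establishes the result for each individual linear ideal $J_i(V)$ and hence, via the ring-homomorphism property, for the product $\ja(V)$. Your argument bypasses the question of generators entirely: since $(\Id\otimes f^*)$ carries each $Y_i\otimes Z_2$ into $Y_i\otimes Z_1$, pullback sends each $\II(Y_i\otimes Z_1)$ into $\II(Y_i\otimes Z_2)$, and the containment for the intersection $\ia$ follows immediately. This handles $\ia$ and $\ja$ uniformly and cleanly.
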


\begin{proof}
Notice that since $\ia(V_1)$ is an ideal in $\cS( W \otimes V_1)$, we can restrict $\cS( \Id \otimes f)$ to $\ia(V_1)$. Moreover, we have that in degree one $\cS( \Id \otimes f)_1= 1d \otimes f$ and that $\ia (V)$ is a linear ideal for any vector space $V $ meaning that $\ia(V)$ is generated in degree one. Thus, we only need to show that the linear generators of $\ia (V_1)$ get mapped by $\cS(\Id \otimes f)_1=\Id \otimes f$ to the linear generators of $\ia(V_2)$ and that $\cS(\Id \otimes f)$ is an algebra homomorphism, so that it maps ideals to ideals.

In general, for any $g \in \Hom (\ve)$ by our definition of an algebra functor $\cR$ in $\gp$ we have that $\cR(g)$ is an algebra homomorphism. In particular, $\cS(\Id \otimes f)$ is algebra homomorphism. 

With regards to the generators of $\ia(V_1)$, let $w \in I_{\cA}$, so that $w=0$ on $\cA$. Then for any $v \in V$ we have that $w \otimes v$ is a linear generator of $\ia (V)$. Moreover, if $w \otimes v_1 \in \ia (V_1)$ then $\ia(f)(w \otimes v_1) = w \otimes f(v_1)$ is a linear generator of $\ia(V_2)$ as $v_2 = f(v_1) \in V_2$ and $w \in I_{\cA}$.

Finally, as $\ia(f)$ is the restriction of the functorial map $\cS( \Id \otimes f)$, we have that $\ia(f)$ is itself functorial.
 
\end{proof}

Using the construction above, we can establish our main result.

\begin{Theorem}\label{reg}
For any subspace arrangement $\cA$ of size $t$ consider the module functor $\ia$ in $\gp$. For any vector space $V$, we have that $\W(\ia)(V)$ is a $t$-regular $\glv$-equivariant ideal in $\W(\cS(\wv))= \we(\wv)$.
\end{Theorem}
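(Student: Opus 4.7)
The plan is to express $\ia(V)$ as an intersection of $t$ linear ideals, invoke the Derksen--Sidman regularity bound to obtain $\reg(\ia)\leq t$ in $\gp$, and then transfer this bound through the functor $\W$ using Proposition~\ref{keyprop} together with the equivariant resolution machinery developed earlier. First, since the vanishing ideal of a union of subspaces equals the intersection of the vanishing ideals of the individual subspaces,
$$\ia(V) = \II\bigl(\textstyle\bigcup_{i=1}^{t} Y_i\otimes Z\bigr) = \bigcap_{i=1}^{t}\II(Y_i\otimes Z),$$
an intersection of $t$ linear ideals in the polynomial ring $\cS(\wv)$.

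By the theorem of Derksen and Sidman~\cite{ds1}, the intersection of $t$ linear ideals in a polynomial ring has Castelnuovo--Mumford regularity at most $t$; hence $\reg(\ia(V))\leq t$ for every $V$, and consequently the functorial regularity $\reg(\ia)\leq t$ in $\gp$. Applying Proposition~\ref{keyprop} with $\cR=\cS$ and $\cM=\ia$ yields that $\W(\ia)$ is a module over $\W(\cS)=\we$ of functorial regularity at most $t$. Evaluating at any $V$ produces a $\we(\wv)$-submodule of $\we(\wv)$, which is $\glv$-equivariant because $\W(\ia)$ is an object in $\gp$ and Lemma~\ref{PV} supplies a compatible $\glv$-action on its evaluation at $V$.

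The remaining subtlety is that Proposition~\ref{keyprop} yields a bound on $\reg(\W(\ia)(V))$ only once $\dim V$ is sufficiently large, whereas the statement asks for every $V$. I would resolve this by taking the minimal equivariant resolution of $\ia$ in $\gp$ constructed in Section~7, whose $i$-th term $\cS\otimes \cE_i$ satisfies $\deg(\cE_i)\leq t+i$ by the regularity bound just established, and applying the exact functor $\W$ to obtain the complex
$$\cdots \to \we\otimes\W(\cE_i) \to \cdots \to \we\otimes\W(\cE_0) \to \W(\ia) \to 0.$$
Evaluating at any fixed $V$ gives a (possibly non-minimal) $\glv$-equivariant free resolution of $\W(\ia)(V)$ over $\we(\wv)$; since $\deg(\W(\cE_i)(V))\leq \deg(\cE_i(V))\leq t+i$ and the degrees appearing in any free resolution dominate those in the minimal one, we conclude $\deg(\Tor_i(\W(\ia)(V),\KK))\leq t+i$ for all $i$, establishing $t$-regularity for every $V$. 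The main obstacle is precisely this last step, bridging the gap between the functorial regularity bound and the pointwise statement; however, the exactness of $\W$ together with the standard comparison between minimal and non-minimal resolutions handles it cleanly.
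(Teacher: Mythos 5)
Your proof follows essentially the same route as the paper's: Derksen--Sidman gives $\reg(\ia(V))\leq t$, exactness of $\W$ makes $\W(\ia)(V)$ an ideal in $\we(\wv)$, and Proposition~\ref{keyprop} transfers the regularity bound. The extra care you take in addressing the gap between the functorial (limit) regularity of Proposition~\ref{keyprop} and the uniform ``for every $V$'' claim is well-placed---the paper's proof cites that proposition directly without making this step explicit---and your fix (that $\deg(\W(\cE_i)(V))\leq\deg(\cE_i(V))\leq t+i$ holds for every $V$, so that the evaluated, possibly non-minimal, complex still bounds the Tor degrees) is precisely the observation already embedded inside the proof of Proposition~\ref{keyprop}, just spelled out more carefully than the paper does.
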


\begin{proof}
Derksen and Sidman proved in ~\cite{ds1} that the intersection of $t$ linear ideals is $t$-regular and this implies that $\ia(V)$ is $t$-regular. We have previously seen that $\W(\cS(\wv))= \we(\wv)$ and that the image under $\W$ of a module functor is a module functor. Furthermore, $\W$ is an exact functor on $\gp$ so that monomorphisms are sent under $\W$ to monomorphisms. Thus $\W(\ia)(V)$ is an ideal in $\we(\wv)$. Finally, we have already established that $\W$ preserves the regularity of a module functor. Therefore, for every vector space $V$, we have that $\W(\ia)(V)$ is $t$-regular. 
\end{proof}

Conca and Herzog showed in \cite{ch} that the product of $t$ linear ideals is $t$-regular and this implies that $\ja(V)$ is also $t$-regular for al $V$. Therefore, the same result holds for $\W(\ja)(V)$.  Moreover, Dersken and Sidman in \cite{ds2} produce regularity bounds for a more general class of ideals constructed from linear ideals in the symmetric algebra. One can adapt Theorem \ref{reg} to establish that the same class of ideals in the exterior algebra has the same regularity bounds.

Consider the module functor $\ja$ for the product ideal $J_{\cA}$. We can characterize the ideal $\W(\ja)(V)$ in the exterior algebra $\we(\wv)$.
\begin{Proposition}\label{mult}
Let $\cA$ be the a subspace arrangement of cardinality $t$ and let $J_i$ be the vanishing ideal of the $i$-th subspace in $\cA$. We have that for every finite dimensional vector space $V$
\[ \W(\ja)(V) = J_1 (V) \wedge J_2 (V) \wedge \cdots \wedge J_t(V). \]
\end{Proposition}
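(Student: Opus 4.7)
The plan is to realize $\ja$ as the image of a multiplication map out of a tensor product of one-subspace module functors, then apply the exact monoidal functor $\W$, and exploit that $\W$ turns the commutative multiplication on $\cS$ into the graded-commutative wedge multiplication on $\we$.

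For each $i$, let $U_i \subseteq W$ be the annihilator of $Y_i$, so that $\II(Y_i) \subseteq \cS(W)$ is generated by $U_i$. Let $\cJ_i$ denote the module functor in $\gp$ with $\cJ_i(V) = \II(Y_i \otimes Z) \subseteq \cS(\wv)$; by construction $\cJ_i(V)$ is generated by the linear forms $U_i \otimes V$. By definition of $\ja$, it is the image of the iterated multiplication map
\[
m_t : \cJ_1 \otimes \cJ_2 \otimes \cdots \otimes \cJ_t \longrightarrow \cS(W \otimes \blank).
\]

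The first step is to identify $\W(\cJ_i)(V)$ with the wedge ideal $J_i(V) \subseteq \we(\wv)$ generated by $U_i \otimes V$. I apply $\W$ to the short exact sequence of $\cS(W\otimes\blank)$-module functors
\[
0 \to \cJ_i \to \cS(W \otimes \blank) \to \cS\bigl((W/U_i) \otimes \blank\bigr) \to 0.
\]
Because $\W$ is exact on $\gp$, and because (via the Cauchy decomposition) $\W\bigl(\cS(X \otimes \blank)\bigr) = \we(X \otimes \blank)$ for any vector space $X$, this yields
\[
0 \to \W(\cJ_i) \to \we(W \otimes \blank) \to \we\bigl((W/U_i) \otimes \blank\bigr) \to 0,
\]
which identifies $\W(\cJ_i)(V)$ with the kernel of the surjection $\we(\wv) \onto \we((W/U_i) \otimes V)$, i.e.\ with the wedge ideal $J_i(V)$.

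The second step applies $\W$ to $m_t$. By exactness of $\W$ and the previously established (signed) monoidal compatibility $\W(\cP)\otimes\W(\cP') \to \W(\cP\otimes\cP')$, the image $\W(\ja)(V)$ is the image of
\[
\W(\cJ_1)(V) \otimes \cdots \otimes \W(\cJ_t)(V) \longrightarrow \we(\wv),
\]
and the $(-1)^{de}$ sign in the braiding compatibility of $\W$ with the tensor product converts the commutative multiplication on $\cS(\wv)$ into the graded-commutative wedge multiplication on $\we(\wv)$. Consequently this image is exactly $J_1(V) \wedge J_2(V) \wedge \cdots \wedge J_t(V)$, which is the desired equality.

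The main obstacle is this last point: one must carefully check that the multiplication morphism of the symmetric algebra functor in $\gp$ is sent by $\W$ to the wedge multiplication of the exterior algebra functor. This is precisely where the sign-twist in the monoidal compatibility of $\W$ does the essential work, trading symmetry for antisymmetry and producing the wedge product structure on the target.
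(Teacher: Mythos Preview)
Your argument is correct and considerably more explicit than the paper's. The paper's proof is two sentences: it observes that $\ja(V)=J_1(V)\cdots J_t(V)$ via the multiplication of $\cS(\wv)$, notes that $\W$ carries this multiplication to \emph{some} multiplication on $\W(\cS)\cong\we$, and then invokes the fact that up to scalars there is a unique $\glv$-equivariant multiplication on $\we(V)$, namely $\wedge$. In particular the paper does not spell out the identification of $\W(\cJ_i)$ with the wedge ideal $J_i(V)$; your exact-sequence and dual-Cauchy argument is a clean way to do this that the paper leaves implicit.

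The one place where the two approaches genuinely diverge is the last step. You argue that the $(-1)^{de}$ sign in the monoidal compatibility of $\W$ forces the transported multiplication to be graded-commutative, and then identify it with $\wedge$. Strictly speaking, the sign-twist only yields graded-commutativity; it does not by itself single out $\wedge$ among all $\glv$-equivariant graded-commutative associative products on the graded vector space $\we(V)$. The paper closes this with the Schur's-lemma uniqueness observation (any $\glv$-equivariant map $\we^d(V)\otimes\we^e(V)\to\we^{d+e}(V)$ is a scalar multiple of $\wedge$), which makes the identification immediate and does not even need the sign computation. Your route is more constructive and explains \emph{why} the product is graded-commutative, but you should either cite that uniqueness fact or note that $\we(V)$ is generated in degree~$1$, so a $\glv$-equivariant associative product is determined (up to a global scalar irrelevant for images) by its restriction $V\otimes V\to\we^2 V$, which by Schur is the wedge.
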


\begin{proof}
Notice that $\ja(V) = J_1(V) J_2(V) \cdots J_t(V)$, using the multiplication structure of the symmetric algebra $\cS (\wv)$. The functor $\Omega$ maps the multiplication map of $\cS$ to a multiplication map in $\W(\cS)$. Up to scalars, there is a unique $\glv$-equivariant multiplication in $\we$, namely the multiplication given by $\wedge$. 
\end{proof}

Using the proposition above, we can reformulate Theorem ~\ref{reg} as general statement in commutative algebra.

\begin{Theorem}\label{bignice}
		The wedge product of $t$ linear ideals in the exterior algebra is $t$-regular.
\end{Theorem}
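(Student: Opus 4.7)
The plan is to combine the regularity preservation of Proposition~\ref{keyprop} with Proposition~\ref{mult} and the Conca--Herzog bound for products of linear ideals. The substantive work has already been carried out in earlier sections; what remains here is a mild packaging, together with the observation that every collection of $t$ linear ideals in an exterior algebra can be realized via the subspace-arrangement construction of Section~9.

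Concretely, suppose $K_1,\dots,K_t$ are arbitrary linear ideals in $\we(U)$ for a finite-dimensional vector space $U$, and let $L_i\subseteq U$ be the subspace of degree-one generators of $K_i$. Set $W=U$, let $Y_i=L_i^{\perp}\subseteq W^*$, and consider the subspace arrangement $\cA=\{Y_1,\dots,Y_t\}$. Specializing the module functor $\ja$ at $V=\KK$ gives $W\otimes V=U$, and each $\II(Y_i)$ is the linear ideal in $\cS(W)$ generated by $L_i$; the same generators produce $K_i$ when re-interpreted in $\we(U)$. By Conca--Herzog \cite{ch}, the product $\ja(\KK)=\II(Y_1)\cdots\II(Y_t)$ is $t$-regular in $\cS(U)$. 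Proposition~\ref{keyprop} then transfers the bound, giving that $\W(\ja)(\KK)$ is $t$-regular in $\we(U)$. Finally, Proposition~\ref{mult} identifies $\W(\ja)(\KK)$ with $K_1\wedge\cdots\wedge K_t$, yielding the theorem.

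The one point that warrants scrutiny is that Proposition~\ref{keyprop} should supply the regularity bound pointwise at the specific value $V=\KK$, rather than merely in the limit $\dim V\to\infty$. This holds because $\W$ is exact on $\gp$, so applying it to the minimal equivariant free resolution of $\ja$ produces a (possibly non-minimal) free resolution of $\W(\ja)$ over $\we(W\otimes\blank)$; the graded degrees at each homological level dominate those of the corresponding minimal resolution, so the regularity estimate passes to every evaluation. Beyond this verification the argument is essentially formal: the theorem reduces to recasting an arbitrary wedge product of linear ideals as $\W$ applied to a product of linear ideals coming from the associated subspace arrangement, which is the main technical content of the paper.
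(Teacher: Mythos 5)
Your proof matches the paper's argument step for step: realize the linear ideals as vanishing ideals of a subspace arrangement, invoke Conca--Herzog for $t$-regularity of $\ja$, transfer through $\W$ via Proposition~\ref{keyprop}, and identify $\W(\ja)(\KK)$ with the wedge product via Proposition~\ref{mult} at a one-dimensional $V$. Your closing observation is a good one and correctly addresses a slight looseness in the paper: the statement of Proposition~\ref{keyprop} only concerns the limiting regularity as $\dim V\to\infty$, so the pointwise conclusion at $V=\KK$ really rests on the inequality $\deg(\W(\cE_i)(V))\leq\deg(\cE_i(V))$ established inside its proof together with the exactness of $\W$, exactly as you explain.
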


\begin{proof}
Every linear ideal $J_i$ is a vanishing ideal of a subspace $W_i$. Consider the subspace arrangement $\cA$ given by a set of linear ideals. Applying Proposition ~\ref{keyprop} we conclude that the associated module functor $\W(\ja)(V)$ in $\we(\wv)$ is $t$-regular. Using Proposition \ref{mult} for $V$ a 1-dimensional vector space, we conclude that the wedge product of the linear ideals is $t$-regular.
\end{proof}

\section{Equivariant Hilbert series and examples}
Let $V$ be an $n$-dimensional vector space. We denote by $s_{\lambda}(x_1, \ldots, x_n)$ the symmetric function which is the character of the irreducible representation $\cS_{\lambda}(V)$.
To the polynomial functor $\cS_\lambda$ we associate the symmetric function $s_\lambda=s_{\lambda}(x_1,x_2,\dots)$ in infinitely many variables. The character of the symmetric algebra functor $\cS=\bigoplus_{d=0}^\infty \cS_d$ is $\sigma=1+s_1+s_2+\cdots$. We call this series of symmetric function the equivariant Hilbert series of $\cS(V)$. The equivariant Hilbert series of the functor $V\mapsto \cS(V\oplus V)=\cS(V)\otimes \cS(V)$ is $(1+s_1+s_2+s_3+\cdots)^2=
1+(2s_1)+(3s_2+s_{1,1})+\cdots$.
As a result of the properties of $\Omega$, we have that the character of $\Omega(\cS_{\lambda})(V)$ is $s_{\lambda'}(x_1, \ldots, x_n)$. For a polynomial functor $\cF$, we consider the symmetric function $H^e(\cF)$ such that $H^e(\cF)(V)$ is the character of $\cF(V)$ as a representation of $\glv$. We refer to $H^e(\cF)$ as the equivariant Hilbert series of the polynomial functor $\cF$. We state the following result for $\ja$, the product module functor of a subspace arrangement $\cA$, but the same result holds for $\ia$, the intersection module functor. 
  
\begin{Theorem}
Let $\cA$ be a subspace arrangement.  Consider $\cJ_{\cA}(V)$, its associated $\glv$-equivariant product ideal in the symmetric algebra,  and $\Omega(\cJ_{\cA})(V)$, the ideal in the exterior algebra obtained by applying $\W$ to $\ja$. We have that
\[ H^e(\Omega(\cJ_{\cA})(V)) = \omega (H^e(\cJ_{\cA}(V))), \]
where $\omega$ is the involution on the ring of symmetric functions sending $s_{\lambda}$ to $s_{\lambda'}$.
\end{Theorem}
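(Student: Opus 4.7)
The plan is to reduce everything to the decomposition of the module functor $\cJ_{\cA}$ into Schur functors and then combine two facts that have already been established in the paper: the functor $\Omega$ is computed on Schur functors by $\Omega(\cS_\lambda)\cong \cS_{\lambda'}$ and commutes with direct sums, and the involution $\omega$ on the ring of symmetric functions is by definition the one sending $s_\lambda \mapsto s_{\lambda'}$.

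First I would decompose the module functor as an object of $\gp$. Since every object of $\gp$ is a (possibly infinite) direct sum of Schur functors, we may write
\[
\cJ_{\cA} \;\cong\; \bigoplus_\lambda \cS_\lambda^{\,m_\lambda},
\]
for some nonnegative integer multiplicities $m_\lambda$ (finite in each fixed degree). Evaluating at $V$ and taking characters, and recalling that $s_\lambda$ is the character of $\cS_\lambda(V)$, yields
\[
H^e(\cJ_{\cA}(V)) \;=\; \sum_\lambda m_\lambda\, s_\lambda.
\]

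Next I would apply $\Omega$ to the decomposition. Using the lemma that $\Omega$ commutes with direct sums together with the lemma $\Omega(\cS_\lambda) \cong \cS_{\lambda'}$, we obtain
\[
\Omega(\cJ_{\cA}) \;\cong\; \bigoplus_\lambda \cS_{\lambda'}^{\,m_\lambda},
\]
so that
\[
H^e(\Omega(\cJ_{\cA})(V)) \;=\; \sum_\lambda m_\lambda\, s_{\lambda'}.
\]
Since $\omega$ is defined as the involution sending $s_\lambda \mapsto s_{\lambda'}$ and extends $\KK$-linearly, the right hand side is precisely $\omega\bigl(\sum_\lambda m_\lambda s_\lambda\bigr) = \omega(H^e(\cJ_{\cA}(V)))$, which gives the claimed identity.

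The only delicate point, and the one I expect to be the main source of care rather than genuine difficulty, is the interaction with the finite dimension of $V$: if $\dim V < \ell(\lambda')$, then $s_{\lambda'}(x_1,\dots,x_{\dim V}) = 0$ even though $s_\lambda(x_1,\dots,x_{\dim V})$ need not vanish, and conversely. One should therefore interpret the equation either at the level of symmetric functions in infinitely many variables (which is the natural home of $\omega$), or equivalently pass to the stable range $\dim V \to \infty$, in which case all the truncations become trivial and every multiplicity $m_\lambda$ is recovered from $H^e(\cJ_{\cA}(V))$. Under either interpretation the identity above is a direct consequence of the Schur decomposition and the two lemmas cited, and the same argument works verbatim for $\cI_{\cA}$ in place of $\cJ_{\cA}$.
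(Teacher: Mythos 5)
Your argument is correct, and it takes a genuinely different and in fact cleaner route than the paper. The paper's proof passes through the minimal free resolution: it uses the Conca--Herzog result that $\ja$ has a linear resolution, reads off $H^e(\ja)$ from that resolution, applies $\W$ termwise to obtain a resolution of $\W(\ja)$, and reads off the new Hilbert series. Your proof bypasses the resolution entirely: you decompose $\cJ_{\cA}$ directly into Schur functors using the semisimplicity of $\gp$, apply the two lemmas $\W(\cS_\lambda)\cong\cS_{\lambda'}$ and $\W(\bigoplus)\cong\bigoplus\W$, and compare characters. This makes the statement a formal consequence of the definitions of $\W$ and $\omega$, requiring neither the linearity of the resolution nor the Conca--Herzog regularity bound, and it therefore applies verbatim to any module functor in $\gp$, not only to $\ja$ or $\ia$. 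What the paper's route buys is the explicit resolution of $\W(\ja)$, which it immediately wants for the worked examples in the same section; what your route buys is generality and economy of hypotheses. Your parenthetical point about the stable range ($\dim V\to\infty$) versus symmetric functions in infinitely many variables is the right way to handle the finite-dimensional truncation issue, and it is a caveat the paper leaves implicit.
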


\begin{proof}
Consider an equivariant resolution of $\ja$. As $\ja$ has a linear resolution, we can read off $H^e(\ja)$ from the resolution (as discussed by Derksen in ~\cite{sym}). Apply $\W$ to the resolution. Each Schur functor $\Sl$ is mapped by $\W$ to $\Slp$. The effect on $H^e(\ja)$ is to change each $s_{\lambda}$ to $s_{\lambda'}$. Thus the new equivariant Hilbert series is $\omega (H^e(\ja))$. However we now have a resolution of $\W(\ja)$, so $\omega (H^e(\ja))$ is its equivariant Hilbert series.
\end{proof}

Consequently, an equivariant Hilbert series of $\Omega(\cJ_{\cA})$ can immediately be obtained from an equivariant Hilbert series of $J_{\cA}$. As we have a recursive combinatorial for $H^e(\ja)$ from \cite{sym}, we can find write down a resolution for $\ja$ and obtain a resolution for $\W(\ja)$ from its equivariant Hilbert series $H^e(\Omega(\cJ_{\cA})=\omega (H^e(\ja))$.

\subsection{Computing equivariant Hilbert series via polymatroids}
Consider a subspace arrangement $\cA = \{ W_1, \ldots, W_t\} \subset W$. Let $A = \{1, 2, \ldots, t\}$ be the indexing set of the subspaces in $\cA$. Each choice of subset $B \subset A$ gives us the subarrangement $\cB = \{ W_i | i \in B\}$ so that any subset of indexes $B$ gives us a product ideal
\[ J_B = \prod_{i \in B} J_i, \]
where $J_i = \II(W_i)$. Additionally, we can define a map $\phi_{B,C}$ between a subset $B$ of size $s$ and a subset $C$ of size $s-1$. We have that $\phi_{B,C} \neq 0 $ only if $C \subset B$. If $B = \{ a_1, \dots, a_s\}$ and $C = \{ a_1, \dots,a_{i-1}, a_{i+1}, \dots,  a_s\}$, then $\phi_{B,C} = (-1)^i \Id$.

These maps allow us to construct the following complex:
\[\mathcal{C}: 0 \to J_A \to \bigoplus_{|B|=t-1} J_B \to \cdots \bigoplus_{|B|=2} J_B \to J_1 \oplus J_2 \oplus \cdots \oplus J_t \to R \to 0\]
where, for example, the first map is the direct sum of the maps 
\[ \phi_{A, \{1,2, \ldots, i-1,i+1, \ldots, t\} } = (-1)^i \Id. \]

One can check that this is indeed a complex: the composition of two consecutive maps is zero. One can consider the homology of the complex, given by $\frac{\ker \phi}{\im \phi}$. Because our ideals are linear, a result of Conca and Herzog \cite{ch} tells us that the homology of the product complex is well-behaved. If the intersection of the subspaces is $0$, then the homogeneous maximal ideal $\mathfrak{m}$ kills the homology of this complex and they show that the $k$th homology is concentrated in degree $k$. In fact, this fact is key in showing that the product of $t$ linear ideals has regularity $t$.

For any $V \in \ve$, when we apply the tensor trick to $\cA$ to obtain $\cA\otimes V$. As a result, we just tensor every term in the complex $\cC$ with $V$ to obtain the complex $\cC \otimes V$.  
Even though the complex is not exact, we have that up to low degree terms 
\[ 0 \approx \sum_{B \subseteq A} (-1)^{|B|} H^e(J_{\cB}(V)), \]
so that for any $V$ we have that
\[(-1)^{n+1} H^e(J_{\cA}(V)) \approx \Sigma_{B \subset A} (-1)^{|B|} H^e(J_{\cB}(V)).\]

In particular, we can use this approach to compute some equivariant Hilbert series inductively. Specifically, we have the following result.

\begin{Proposition}\label{lines}
Let $\cA$ be the subspace arrangement given by the union of $t$ distinct lines $\mathbb{K}^m$. We have that 
\[ H^e(J_\cA) = \sigma^m - t \sigma + \texttt{lower degree terms}.\]
\end{Proposition}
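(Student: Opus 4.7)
My plan is to proceed by induction on $t$, using the complex $\cC$ constructed just above together with the Conca--Herzog control over its homology.

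For the base case $t=1$, the arrangement is a single line $W_1 \cong \KK$ in $W = \KK^m$, so $J_\cA(V) = \II(W_1 \otimes V)$ is the kernel of the surjection $\cS(W \otimes V) \twoheadrightarrow \cS(W_1 \otimes V)$. Since $W \otimes V \cong V^{\oplus m}$ and $W_1 \otimes V \cong V$, the two equivariant Hilbert series involved are $\sigma^m$ and $\sigma$, giving $H^e(J_\cA) = \sigma^m - \sigma$ exactly---the claim with vanishing lower-degree remainder.

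For the inductive step, I would assume the formula for every subarrangement of size strictly less than $t$. Because the lines of $\cA$ are distinct, they meet pairwise only at the origin, so the concentration result of Conca and Herzog recalled above applies: $H_k(\cC)$ is concentrated in degree $k$, and therefore $\sum_k (-1)^k H^e(H_k(\cC))$ is a symmetric function of bounded total degree, i.e.\ only lower-degree terms. Equating the Euler characteristic of $\cC$ with that of its homology then gives
\[(-1)^t H^e(J_\cA) + \sum_{k=0}^{t-1}(-1)^k \sum_{|B|=k} H^e(J_\cB) = \text{lower degree terms}.\]
Plugging in the inductive hypothesis $H^e(J_\cB) = \sigma^m - k\sigma + \text{lower degree terms}$ for every $|B| = k < t$ and collecting by powers of $\sigma$ converts this into
\[(-1)^t H^e(J_\cA) = -\sigma^m \sum_{k=0}^{t-1}(-1)^k \binom{t}{k} + \sigma \sum_{k=0}^{t-1}(-1)^k k \binom{t}{k} + \text{lower degree terms}.\]
The two alternating binomial sums evaluate respectively to $(-1)^{t+1}$ and $(-1)^{t+1} t$ for $t\geq 2$ (both coming from the identities $\sum_{k=0}^{t}(-1)^k \binom{t}{k}=0$ and $\sum_{k=0}^{t}(-1)^k k \binom{t}{k}=0$); multiplying through by $(-1)^t$ then produces $H^e(J_\cA) = \sigma^m - t\sigma + \text{lower degree terms}$, as required.

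The main obstacle will be the homological input, namely justifying that $\sum_k (-1)^k H^e(H_k(\cC))$ is indeed a lower-degree contribution. This rests entirely on the Conca--Herzog concentration statement that $H_k(\cC)$ sits in degree $k$, which is applicable here since distinct lines through $0$ intersect only at $0$. With that one input secured, the rest of the argument is formal binomial and symmetric-function bookkeeping.
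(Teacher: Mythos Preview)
Your proof is correct and follows essentially the same route as the paper: induction on $t$, the Euler characteristic of the complex $\cC$ controlled by the Conca--Herzog concentration result, and the same pair of binomial identities (which the paper proves via $f(x)=(1-x)^t$ and its derivative at $x=1$, exactly as you indicate). The only cosmetic difference is that the paper anchors the induction at $t=0$ with $H^e(J_\emptyset)=H^e(R)=\sigma^m$, whereas you start at $t=1$; since your inductive sum runs over all $k<t$ including $k=0$, you are implicitly using that $t=0$ case as well, so it would be cleanest to state it explicitly.
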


\begin{proof}
Consider the complex 
\[ 0 \to \prod_{i\in A} J_i(V) \to \bigoplus_{|B|=t-1} J_B(V) \to \cdots \to J_1(V) \oplus J_2(V) \oplus \cdots \oplus J_t (V) \to R \to 0.\]
As the $k$-th homology of the complex is concentrated in degree $k$, even though the complex is not exact, we have that up to low degree terms 
\[ 0 \approx \sum (-1)^{|B|} H^e(J_{\cB}) \]
so that 
\[(-1)^{t+1} H^e(J_{\cA}) \approx \sum_{B \subsetneq A} (-1)^{|B|} H^e(J_{\cB})\]

We will prove the claim by induction on $t$. For $t=0$, we have that $H^e(J_{\emptyset})= H^e(R) = \sigma^m$, so the claim holds.
 
Assuming the result for all $\cB$ such that $|B|<t$, we have that 
\[(-1)^{t+1} H^e(J_{\cA}) \approx \sum_{B \subsetneq A} (-1)^{|B|} (\sigma^m - |B| \sigma) . \]

Let $k$ be a non-negative number. The number of $k$-subsets in $A$ is $ \binom{t}{k}$. Thus, we have to prove that
\[\sum_{k<t} (-1)^{k} \binom{t}{k} = (-1)^{t+1}\]
and
\[\sum_{k<t} (-1)^{k} \binom{t}{k} (-k) = (-1)^{t+1}(-t).\]
Consider the generating function $f(x) = (1-x)^t = \sum_{k \leq t} (-1)^{k} \binom{t}{k}  x^k$. Firstly, for $x=1$ we have that
\[ f(1) = 0 = \sum_{k \leq t} (-1)^{k} \binom{t}{k}, \]
so $(-1)^{t+1} = -(-1)^t = \sum_{k<t} (-1)^{k} \binom{t}{k}$, as required. Secondly, consider $f'(1)$:
\[ f'(1) = 0 = \sum_{k \leq t} (-1)^{k} \binom{t}{k} k. \]
Rearranging, we get that $(-1)^{t+1}t = -(-1)^t t = \sum_{k<t} (-1)^{k} \binom{t}{k} k$. Multiplying both sides by $-1$, we get the required equality.

Therefore, by induction, the original result holds.
 \end{proof}

We mentioned that the equivariant Hilbert series of the product ideal can be computed purely from the combinatorial structure of $\cA$, meaning that the information provided by the polymatroid of $\cA$ is sufficient to determine $H^e(J(V))$. 

\begin{Definition}
Let $\cA = \{ W_1, \ldots, W_t \} \subset W$ and consider the index set $A = \{1, \dots , t\}$.  We associate to a subset $B = \{ a_1, \dots, a_s\} \subset \{1, \dots, t\}$ the subarrangement $\cB = \{ W_{a_1}, \ldots , W_{a_s}\}$. The polymatroid $(A, \rk)$ associated to $\cA$ is the power set $\mathfrak{P}(A)$ with rank function $rk : \mathfrak{P}(A) \to \NN $ defined on $B \subseteq A$ by
\[\rk(B) = \dim(W) - \dim(\cap_{i\in B} W_{i}).\]
\end{Definition}

Notice that a polymatroid where all one element subsets have rank one is a matroid. To each subspace arrangement $\cA$ we can associate a symmetric polynomial $P(A)$ associated to the polymatroid $(A,rk)$. We have that $P(A)$ measures how far is the complex $\cC$ from being exact. In particular, if $P(A)=0$, we will have that the complex $\cC$ is exact.

\begin{Definition} 
Define the symmetric polynomial $P(A)$ is of degree $\leq t -1$ recursively as follows. Set $P(\emptyset) =1$. Then
\[ P(A) = u_0 + \cdots + u_{|A|-1}\] 
when 
\[ \sum u_i = - \sum_{B \subsetneq A} (-1)^{|A|-|B|}\sigma^{rk(A)-rk(B)} P(B). \]
\end{Definition}

A result of Derksen gives us a way to compute the equivariant Hilbert series of $J_{\cA}$ from the polymatroid of $\cA$ via the symmetric polynomial $P(A)$.
\begin{Theorem}[Derksen, Theorem 5.2 in \cite{sym}]\label{comp}
Let $\sigma = \sum s_i$. We have that 
\[ \sigma^{(n-rk(A))} P(A) = \sum_{B \subseteq A} (-1)^{|B|} H^e(J_B) \]
\end{Theorem}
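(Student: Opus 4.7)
I would argue by induction on $t = |A|$. The base case $t = 0$ is immediate: $P(\emptyset) = 1$, $rk(\emptyset) = 0$, $J_\emptyset = R$, so both sides reduce to $\sigma^n$.

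For the inductive step the main ingredient is the Conca-Herzog product complex
\[
\mathcal{C}: 0 \to J_A \to \bigoplus_{|B|=t-1} J_B \to \cdots \to \bigoplus_{|B|=1} J_B \to R \to 0
\]
recalled above. Its equivariant Euler characteristic equals
\[
\sum_{B \subseteq A} (-1)^{|B|} H^e(J_B).
\]
The crucial input from \cite{ch} is that, after passing to the quotient by the ideal of the ambient intersection $Z = \bigcap_{i \in A} W_i$, the homology of $\mathcal{C}$ is annihilated by the maximal ideal and its $k$-th piece is concentrated in degree $k$. Lifting back to $R$, this forces the Euler characteristic to factor as
\[
\sigma^{n - rk(A)} \cdot Q_A,
\]
where $\sigma^{n - rk(A)}$ is the equivariant Hilbert series of the coordinate ring of $Z$ and $Q_A$ is a symmetric function of degree at most $t-1$.

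To close the induction, isolate the $B = A$ term and substitute the inductive hypothesis for every $B \subsetneq A$, yielding
\[
(-1)^t H^e(J_A) + \sum_{B \subsetneq A} (-1)^{|B|} \sigma^{n - rk(B)} P(B) = \sigma^{n - rk(A)} Q_A.
\]
Rearranging, multiplying by $(-1)^{t+1}$, and comparing with the recursive definition of $P(A)$ (which picks out the components of degree $\leq t-1$ of precisely the analogous alternating sum) identifies $Q_A$ with $P(A)$, closing the induction.

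The main obstacle is the homological input: one must verify that the Conca-Herzog analysis extends equivariantly (via the tensor construction $\cA \otimes Z$ of Section 9, using linear reductivity of $\glv$ in characteristic zero), and that ``factoring out'' the ambient intersection $Z$ cleanly contributes $\sigma^{n - rk(A)}$ with a residual symmetric function of degree at most $t - 1$. Once this homological core is in hand, the rest of the induction is a routine bookkeeping calculation on the defining recursion of $P(A)$.
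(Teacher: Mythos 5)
This statement is Derksen's Theorem~5.2 in \cite{sym}; the paper quotes it as an external result and offers no proof of its own, so there is no in-paper argument against which to compare yours. That said, your outline---equivariant Euler characteristic of the Conca--Herzog complex $\mathcal{C}$, homology concentration after quotienting by the ideal of the ambient intersection $Z=\bigcap_i W_i$, factoring out $\sigma^{n-\rk(A)}$, and matching the residual against the recursion defining $P(A)$---is the strategy the surrounding text implicitly recalls, and it is sound in outline.

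There is, however, a genuine bookkeeping error in the middle of your inductive step. The displayed equation
\[
(-1)^t H^e(J_A) + \sum_{B \subsetneq A} (-1)^{|B|}\,\sigma^{n-\rk(B)} P(B) = \sigma^{n-\rk(A)} Q_A
\]
is \emph{not} what you get by ``substituting the inductive hypothesis.'' The inductive hypothesis gives $\sigma^{n-\rk(B)} P(B) = \sum_{C \subseteq B}(-1)^{|C|} H^e(J_C)$, an alternating sum over subsets of $B$, not $H^e(J_B)$ itself; the two are related by M\"obius inversion on the Boolean lattice, not by equality. Expanding your sum as a double sum and interchanging the order of summation produces the inner sum $\sum_{B:\, C\subseteq B \subsetneq A}(-1)^{|B|} = -(-1)^{|A|}$, so that in fact
\[
\sum_{B \subsetneq A}(-1)^{|B|}\,\sigma^{n-\rk(B)}P(B) \;=\; (-1)^{|A|+1}\sum_{C \subsetneq A}(-1)^{|C|}H^e(J_C),
\]
and your intermediate identity is off by a factor $(-1)^{t+1}$ (and is simply false when $t$ is even). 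With the sign corrected, the argument does close: multiplying through by $\sigma^{\rk(A)-n}$ and isolating the $H^e(J_A)$ term, one observes that $H^e(J_A)$ is supported in degrees $\geq t$ because $J_A$ is a product of $t$ linear ideals, so truncating to degrees $\leq t-1$ kills that term and the remaining expression is exactly the defining recursion for $P(A)$, forcing $Q_A = P(A)$. You gesture at the truncation, but it must be stated as the mechanism that removes the $H^e(J_A)$ contribution, and the sign discrepancy must be repaired before the comparison with the recursion for $P(A)$ is legitimate.
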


\begin{Example}
Consider the two coordinate axes in a two-dimensional vector space $W$. As linear subspaces we can characterize the $x$-axis $W_1$ as the subspace $\{(x,y) \mid y = 0 \}$ and similarly, the $y$-axis $W_2$ as the subspace $\{ (x,y) \mid y = 0 \}$. Then $\cA = \{W_1, W_2 \}$ is a subspace arrangement. The associated linear ideals are $J_1 = (y)$, $J_2 = (x)$, whilst the product ideal is $J_A = (x)(y) = (xy)$.

The polymatroid data $(A, \rk)$ associated to $\cA$ is given below together with the resulting symmetric polynomials $P(B)$ for $B \subseteq A$
\begin{align*}
\rk(\emptyset) = 0 , & \quad P(\emptyset) = 1 \\
\rk(\{i\}) = 1 , & \quad P(\{i\}) = 1  \\
rk(\{1,2\}) = 2 , & \quad P(\{1,2\}) = P(A) = 1
\end{align*}
Using this data, we get that $H^e(\ja) = \sigma^2 - 2\sigma + 1$.
\end{Example}

Let $\cA$ be a subspace arranement of cardinality $t$. Notice that the product of $t$ linear ideals is generated in degree $t$ and it is $t$-regular by Conca and Herzog's result. Thus, we notice that the minimal free resolution of its associate polynomial functor $\ja$ is a linear resolution. Then, we will have that the torsion module $E_i = \Tor_i(\ja, \KK)$ is the only torsion module in the resolution of degree $i+t$. The following result gives us a way to find the signed sum of the characters of the torsion modules from the equivariant Hilbert series $H^e$.

\begin{Corollary}[Derksen, Corollary 5.3 in \cite{sym}]\label{tor}
Let $\sigma = \sum s_i$, let $\cA$ be a subspace arrangement in $W \cong \KK^m$, and let $H^e(\ja)$ be the equivariant Hilbert series of the product ideal functor $\ja$. Let \[ \sigma^{-m} H^e(\ja) = \sum_{\lambda, |\lambda| \geq t} (-1)^{|\lambda| - t} a_{\lambda} s_{\lambda}, \]
where $a_{\lambda} \in \ZZ_{\geq 0}$ is the multiplicity of $s_{\lambda}$. Then, for $E_d = \Tor_d(\ja, \KK)$, we have that
\[ H^e(E_d) = \sum_{|\lambda| = d} a_{\lambda} s_{\lambda}. \]
\end{Corollary}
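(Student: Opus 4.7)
The plan is to leverage the linearity of the minimal $\glv$-equivariant free resolution of $\ja$ to read off the multiplicities of the torsion modules from the alternating character sum.

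First, I would recall that $J_{\cA}(V)$ is a product of $t$ linear ideals, so by the Conca--Herzog theorem~\cite{ch} it has regularity exactly $t$. Since it is moreover generated in degree $t$, its minimal $\glv$-equivariant free resolution
\[ \cdots \to \cS(W \otimes \blank) \otimes E_i \to \cdots \to \cS(W\otimes \blank) \otimes E_0 \to \ja \to 0 \]
is \emph{linear}: every irreducible summand of $E_i = \Tor_i(\ja, \KK)$ sits in internal degree $i+t$.

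Next, I would take equivariant Hilbert series along the resolution. Because $H^e$ is additive on short exact sequences, extending iteratively to a long exact sequence gives an alternating sum, and because $\dim W = m$ implies $\cS(W \otimes V) \cong \cS(V)^{\otimes m}$ has character $\sigma^m$, I obtain
\[ H^e(\ja) = \sum_{i\geq 0} (-1)^i \sigma^m \, H^e(E_i), \]
which after dividing by $\sigma^m$ becomes
\[ \sigma^{-m} H^e(\ja) = \sum_{i \geq 0} (-1)^i H^e(E_i). \]

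The final step, which is the crux of the argument, exploits linearity: since $H^e(E_i)$ is supported entirely in internal degree $i+t$, the contributions of distinct $E_i$ to the right-hand side have disjoint degree support, and no cancellation occurs in the alternating sum. Consequently, each Schur function $s_\lambda$ with $|\lambda| \geq t$ appears in $\sigma^{-m} H^e(\ja)$ with coefficient $(-1)^{|\lambda|-t} a_\lambda$, where $a_\lambda$ is its multiplicity in $H^e(E_{|\lambda|-t})$. Matching this against the defining expansion $\sigma^{-m} H^e(\ja) = \sum_{|\lambda|\geq t}(-1)^{|\lambda|-t}a_\lambda s_\lambda$ yields the stated formula for $H^e(E_d)$. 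The main obstacle is essentially bookkeeping: justifying that characters in $\gp$ behave additively along the resolution and that the product $\sigma^m \cdot H^e(E_i)$ correctly computes the character of each free module, both of which follow cleanly from semisimplicity of $\gp$ and the tensor structure.
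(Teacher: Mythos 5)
Your argument is correct and is exactly the content of the paper's one-sentence justification (``Corollary \ref{tor} holds because there is only one torsion module $E_d$ of each degree $d$''): you establish that the minimal resolution is linear via Conca--Herzog plus the fact that $\ja$ is generated in degree $t$, take the alternating sum of characters along the resolution, divide out $H^e(\cS(W\otimes\blank))=\sigma^m$, and then observe that linearity makes the degree supports of the $H^e(E_i)$ pairwise disjoint so no cancellation can occur. One small flag: your derivation actually produces $H^e\bigl(\Tor_i(\ja,\KK)\bigr)=\sum_{|\lambda|=i+t}a_\lambda s_\lambda$, i.e., the index and the partition size differ by $t$; the displayed formula $H^e(E_d)=\sum_{|\lambda|=d}a_\lambda s_\lambda$ as stated in the paper with $E_d=\Tor_d(\ja,\KK)$ has an off-by-$t$ indexing slip, so you should not claim your conclusion ``matches the stated formula'' literally --- rather, your correct computation exposes that the statement's indexing should either read $E_d=\Tor_{d-t}(\ja,\KK)$ or $\sum_{|\lambda|=d+t}$.
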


Corollary \ref{tor} holds because there is only one torsion module $E_d$ of each degree $d$. Therefore, we will have that each $s_\lambda$ in $\sigma^{-m} H^e$ with $|\lambda| = d$  will give us precisely a Schur functors $\Sl$ appearing in $E_d$.

\subsection{Example: powers of the maximal ideal}
Consider the subspace arrangement $\cA =\{ Y_1, \ldots, Y_t \} \subset \KK$ with all $Y_i = \{0\}$. This subspace arrangement is $t$ copies of the zero dimensional subspace in a vector space $Y$ of dimension one. The equivariant product ideal of this subspace arrangement is the $t$-th power of maximal ideal $\cM(V) = (x_1, \ldots, x_n)$ for $V$ an $n$-dimensional vector space. So we have $\cM^t(V)$ in $\cS(\wv)\cong \cS(V)$. From ~\cite{ds1} we know that $\cM ^t(V)$ is $t$-regular and from Theorem~\ref{reg} we can establish that $\W(\cM ^t)(V)$ is also $t$-regular. 

Applying Theorem \ref{comp} to $H^e(\cM ^t)$ we get a minimal resolution for $\cM ^t$.
Let $\sigma = \sum_i s_i = 1 + s_1 + s_2 + \ldots$ and notice that $H^e(\cS)= \sigma$. For $t=1$, we get that $H^e(\cM ) = \sigma - 1$ , yielding the Koszul resolution for $\cM$:
\[ \cdots \to \cS \otimes \cS_{(1,1,1)} \to \cS \otimes \cS_{(1,1)} \to \cS \otimes \cS_{(1)} \to \cM \to 0, \] 
and the following Koszul resolution for $\W(\cM)(V)$:
\[ \cdots \to \we \otimes \cS_{(3)} \to \we \otimes \cS_{(2)} \to \we \otimes \cS_{(1)} \to \W(\cM) \to 0. \]

For $t=2$, we get that $H^e(\cM ^2) = \sigma - (1+ s_1)$. This gives the following resolution for $\cM^2$:
\[ \cdots \to \cS \otimes \cS_{(2,1,1)} \to \cS  \otimes \cS_{(2,1)}  \to \cS  \otimes \cS_{(2)} \to \cM^2 \to 0, \] 
yielding the following resolution for $\W(\cM^2)$:
\[ \cdots \to \we \otimes \cS_{(3,1)} \to \we \otimes \cS_{(2,1)} \to \we \otimes \cS_{(1,1)} \to \W(\cM^2) \to 0. \]

\subsection{Example: distinct lines in a plane}
Consider the subspace arrangement $\cA$ given by $t$ distinct lines in a vector space $Y$ of dimension two. We have that $\cA =\{ Y_1, \ldots, Y_t \} \subset \KK^2$ and where for each $i$ the subspace $Y_i$ is a line in $\KK^2$, such that all lines $Y_i$ are distinct.  By Proposition \ref{lines}, we have that
\[ H^e(\ja) = \sigma^2 - t \sigma - Q(A) ,\]
where $Q(A)$ is a symmetric polynomial of degree less than $t$ depending on the symmetric polynomial $P(A)$ of the polymatroid $(A, \rk)$.

In particular, for $t=2$ we have that $\cA$ consists of two lines in $\KK^2$. We can assume these two lines to be the $y$-axis the $x$-axis, so that $J_1 J_2 = (x,y)$ in $\KK[x,y]$. For each vector space $V$ of dimension $n$, we get the product ideals
\[\ja(V) =  J_1(V) J_2(V) = (x_1,\ldots ,x_n)(y_1, \ldots, y_n) = (x_i y_j) \] 
and 
\[\W(\ja)(V) =  J_1(V) \wedge J_2(V) = (x_1,\ldots ,x_n) \wedge (y_1, \ldots, y_n) = (x_i \wedge y_j) \] 
where $1 \leq i,j \leq n$.

Using the polymatroid of $\cA$, we get that $H^e(\ja) = (\sigma - 1)^2$. Using this formula for the equivariant Hilbert series of $\ja$ we get the following resolution:
\[ \cdots \to  \cS \otimes (\mathcal{S}_{(2,2)} \oplus \mathcal{S}_{(2,1,1)}^3 \oplus \mathcal{S}_{(1,1,1,1)}^3) \to \cS \otimes (\mathcal{S}_{(2,1)}^2 \oplus \mathcal{S}_{(1,1,1)}^2) \to \cS \otimes (\mathcal{S}_{(2)}\oplus \mathcal{S}_{(1,1)}) \to   \mathcal{J}_{\cA} \to 0, \]

yielding the following resolution for $\W( \ja)$:
\[ \cdots \to \we \otimes (\mathcal{S}_{(2,2)} \oplus \mathcal{S}_{(3,1)}^3 \oplus \mathcal{S}_{(4)}^3) \to \we \otimes (\mathcal{S}_{(2,1)}^2\oplus \mathcal{S}_{(3)}^2)) \to \we \otimes (\mathcal{S}_{(1,1)}\oplus \mathcal{S}_{(2)}) \to \Omega( \mathcal{J}_{\cA}) \to 0. \] 	

\subsection{Example: a line and a plane}
Consider the subspace arrangement $\cA$ given by a plane and a line normal to it in an ambient space of dimension three. We have that $\cA =\{ Y_1, Y_2 \} \subset \KK^3$, where we can assume that $Y_1$ is the $(x,y)-$ plane and that $Y_2$ is the $z$-axis. Then in $\KK[x,y,z]$ we have that $J_1= (z)$ and $J_2 = (x,y)$, so that $J = J_1 J_2 = (zx, zy)$. We get that
\[ H^e(\ja) = \sigma^3 - \sigma^2 - \sigma + 1. \]
Using the formula above for the equivariant Hilbert series of $\ja$ we get the following resolution:
\begin{multline*}
\cdots \to  \cS \otimes (\cS_{(3,1)}^3 \oplus \cS_{(2,2)}^5 \oplus \cS_{(2,1,1)}^{12} \oplus \cS_{(1,1,1,1)}^9) \to \\ 
\to \cS \otimes ( \cS_{(3)} \oplus \cS_{(2,1)}^6 \oplus \cS_{(1,1,1)}^5) \to \cS \otimes (\cS_{(2)}^2 \oplus \cS_{(1,1)}^2) \to  \ja \to 0,
\end{multline*}
yielding the following resolution for $\W( \ja)$:
\begin{multline*}
\cdots \to  \we \otimes (\cS_{(2,1,1)}^3 \oplus \cS_{(2,2)}^5 \oplus \cS_{(3,1,1)}^{12} \oplus \cS_{(4)}^9) \to \\ 
\to \we \otimes ( \cS_{(1,1,1)} \oplus \cS_{(2,1)}^6 \oplus \cS_{(3)}^5) \to \we \otimes (\cS_{(1,1)}^2 \oplus \cS_{(2)}^2) \to  \W(\ja) \to 0.
\end{multline*}

\subsection{Example: three coordinate axes}
Consider the subspace arrangement $\cA$ given by the three coordinate axes in $\KK^3$. We have that $\cA =\{ Y_1, Y_2, Y_3 \}$, where $J_1= (y,z)$, $J_2 = (x,z)$, and $J_3 = (x,z)$ in $\KK[x,y,z]$. Consider in this case the \emph{intersection} ideal $I_{\cA} = J_1 \cap J_2 \cap J_3$. One can check that $I_{\cA} = (xy, xz, yz)$. Being generated in degree two, we know that the regularity of $I_{\cA}$ is at least two. One can check that $I_{\cA}$ is the sum of three products of linear ideals, 
\[ I_{\cA} = (xy) + (xz) + (yz)= (xy,xz,yz), \]
so that we can use a result of Derksen and Sidman \cite{ds2} to conclude that $I_{\cA}$ is 4-regular. 

We can show that $I_{\cA}$ has regularity two. In fact, we will show that $\ia (V)$ has regularity two for every vector space $V$, so that the functor $\ia$ has regularity two. Let us use the notation $\mathbf{x} = x_1, \ldots, x_n$, and similarly for $\mathbf{y}, \mathbf{z}$.

\begin{Proposition}
Let $V$ be a vector space of dimension $n$. Let $J_1(V)=(\mathbf{y}, \mathbf{z}), J_2(v)= (\mathbf{x},\mathbf{z}), J_3(V)= (\mathbf{x}, \mathbf{y})$. Then the ideal 
\[ \ia(V) = J_1(V) \cap J_2(V) \cap J_3(V) = (\mathbf{y}, \mathbf{z}) \cap (\mathbf{x},\mathbf{z}) \cap (\mathbf{x}, \mathbf{y})\]
has regularity two.
\end{Proposition}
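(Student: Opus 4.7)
The plan is to realize $\ia(V)$ as a sum $A+B$ of two products of linear ideals whose intersection is itself a product of linear ideals, and then invoke Conca and Herzog's regularity theorem via the Mayer--Vietoris short exact sequence. Set $A = (\mathbf{x})(\mathbf{y},\mathbf{z}) = (\mathbf{x})(\mathbf{y}) + (\mathbf{x})(\mathbf{z})$ and $B = (\mathbf{y})(\mathbf{z})$. Since a polynomial lies in $(\mathbf{y},\mathbf{z}) \cap (\mathbf{x},\mathbf{z}) \cap (\mathbf{x},\mathbf{y})$ exactly when every monomial in its support contains variables from at least two of the three groups $\{x_i\},\{y_j\},\{z_k\}$, the ideal $\ia(V)$ is the monomial ideal generated in degree two by the products $x_i y_j$, $x_i z_k$, $y_j z_k$; in particular, $\ia(V) = A + B$.

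The key computation is the intersection $A \cap B$. Reading off the generators, a monomial lies in $A$ iff it contains some $x$-variable together with at least one $y$- or $z$-variable, and lies in $B$ iff it contains both a $y$- and a $z$-variable. Combining these conditions, a monomial lies in $A \cap B$ iff it contains a variable from each of the three groups, which is precisely the product ideal $(\mathbf{x})(\mathbf{y})(\mathbf{z})$. Hence
\[
A \cap B \;=\; (\mathbf{x})(\mathbf{y})(\mathbf{z}).
\]

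Feeding this into the short exact sequence
\[
0 \longrightarrow A \cap B \longrightarrow A \oplus B \longrightarrow A + B \;=\; \ia(V) \longrightarrow 0,
\]
Conca and Herzog's theorem gives $\reg(A) = \reg(B) = 2$ (products of two linear ideals) and $\reg(A \cap B) = 3$ (product of three linear ideals). The standard long-exact-sequence-in-$\Tor$ inequality $\reg(P) \le \max\{\reg(N),\, \reg(M)-1\}$ for $0 \to M \to N \to P \to 0$ then yields
\[
\reg(\ia(V)) \;\le\; \max\{2,\, 3-1\} \;=\; 2,
\]
while $\reg(\ia(V)) \ge 2$ is automatic since the minimal generators $x_i y_j$, $x_i z_k$, $y_j z_k$ all have degree two.

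The only genuinely non-formal step, and the main obstacle though a mild one, is the monomial identification $A \cap B = (\mathbf{x})(\mathbf{y})(\mathbf{z})$; once this is in hand, the upper bound on the regularity drops out of Conca--Herzog together with one line of homological algebra, and the matching lower bound is forced by the degrees of the generators.
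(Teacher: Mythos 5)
Your proof is correct and takes a genuinely different route from the paper's. The paper argues geometrically: it realizes $\ia(V)$ as the kernel of the restriction map from $\KK[\mathbf{x},\mathbf{y},\mathbf{z}]$ to the subring $U \subseteq \KK[X]\oplus\KK[Y]\oplus\KK[Z]$ of triples agreeing at the origin, first bounds $\reg(U)\le 1$ from the short exact sequence with cokernel $\KK^2$, and then bounds $\reg(\ia(V))\le 2$ from a second short exact sequence, invoking Eisenbud's Corollary 20.19 at each step. You instead stay entirely on the ideal side: you decompose $\ia(V)=A+B$ with $A=(\mathbf{x})(\mathbf{y},\mathbf{z})$ and $B=(\mathbf{y})(\mathbf{z})$, both products of two linear ideals, compute the overlap $A\cap B=(\mathbf{x})(\mathbf{y})(\mathbf{z})$ exactly using the monomial structure, and feed the Mayer--Vietoris sequence into Conca--Herzog together with the complementary part of the same Corollary 20.19. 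Your argument is a nice refinement of the approximation-by-products strategy: the paper remarks that the naive decomposition $\ia(V)=(\mathbf{x})(\mathbf{y})+(\mathbf{x})(\mathbf{z})+(\mathbf{y})(\mathbf{z})$ into three products only gives $4$-regularity via Derksen--Sidman's general theorem, whereas your two-term grouping, combined with the explicit monomial identification of $A\cap B$ as again a product of linear ideals, recovers the sharp bound of $2$. Both proofs ultimately rest on the same short-exact-sequence regularity inequality; the paper's exploits the geometry of the arrangement, while yours exploits the monomial structure and Conca--Herzog directly.
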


\begin{proof}
We have that $\ia(V)$ is generated in degree at least two. In fact, one can check that 
\[\ia(V) = (x_1 y_1, \ldots , x_i y_j, \ldots, x_n y_n) + (x_1 z_1 , \ldots, x_i z_k , \ldots, x_n z_n) + (y_1 z_1, , \ldots , y_j z_k , \ldots, y_n z_n),  \]
where $1 \leq i,j,k \leq n$. As $\ia(V)$ is generated in degree two, it has regularity at least two. So to prove the proposition it is enough to show that the regularity is at most two.

Let $\KK[X]$ be the coordinate ring of $X = Y_1 \otimes V$. We have that $X$ is the subspace spanned by the $x$ coordinate axes 
\[ \KK[X] = \KK[\mathbf{x}, \mathbf{y}, \mathbf{z}] / J_1(V) = \frac{\KK[\mathbf{x}, \mathbf{y}, \mathbf{z}]}{(\mathbf{y}, \mathbf{z})}. \] 
Similarly, $\KK[Y] = \KK[\mathbf{x}, \mathbf{y}, \mathbf{z}] / J_2(V)$ and $\KK[Z] = \KK[\mathbf{x}, \mathbf{y}, \mathbf{z}] / J_3(V)$. Let $\pi_i$ be the canonical projection $\pi_i: \KK[\mathbf{x}, \mathbf{y}, \mathbf{z}] \to \KK[\mathbf{x}, \mathbf{y}, \mathbf{z}]/J_i(V)$. We then have a map $\pi = (\pi_1, \pi_2, \pi_3)$ such that 
\[ \pi: \KK[\mathbf{x}, \mathbf{y}, \mathbf{z}] \to \KK[X] \oplus \KK[Y] \oplus \KK[Z]. \]

We have that $\pi$ factors through $\KK[\cA] = \KK[\mathbf{x}, \mathbf{y}, \mathbf{z}]/ \ia(V)$ giving us a map $\phi$:
\[ \phi: \KK[\cA] \to \KK[X] \oplus \KK[Y] \oplus \KK[Z]. \]
Notice that an element of $r \in \KK[\cA]$ can be represented as $ r = f(\mathbf{x}) + g(\mathbf{y}) + h(\mathbf{z})$. Hence $\phi$ is given by $f(\mathbf{x}) + g(\mathbf{y}) + h(\mathbf{z}) \mapsto (f(\mathbf{x}), g(\mathbf{y}),  h(\mathbf{z}))$. The image of $\phi$ is 
\[ U = \{ (f,g,h) : f(0)= g(0) = h(0) \} \]
as the intersection of any two of the three subspaces is the origin and so $X \cap Y \cap Z$ is also $0$. Thus, $(f,g,h)$ determines a function on $\cA$, the union $X \cup Y \cup Z$, precisely when the functions $f,g,h$ agree at $0$.

Consider the short exact sequence of modules
\[ 0 \to U \to \KK[X] \oplus \KK[Y] \oplus \KK[Z] \to \KK^2 \to 0, \]
where the last map is the projection $(f,g,h) \mapsto (f(0)-g(0), g(0)-h(0))$. Consider the following result on regularity (part of Corollary 20.19 in \cite{ca}).

\begin{Lemma}\label{le}
If $A,B,C$ are finitely generated graded modules, and
\[0 \to A \to B \to C \to 0\]
is exact, then $\reg(A) \leq \max \{ \reg(B), \reg(C)+1\}$.
\end{Lemma}

Thus, we can bound the regularity of the first module $A$ in a short exact sequence if we know bound on the regularity of the other two modules. In particular, we have that 
\[ \reg (U) \leq \max \{ \reg(\KK[X] \oplus \KK[Y] \oplus \KK[Z]), \reg(\KK^2) +1 \} = 1 .\]

Finally, consider the short exact sequence of modules
\[ 0 \to \ia(V) \to \KK[\mathbf{x}, \mathbf{y}, \mathbf{z}] \to U \to 0, \]
where the last map is the restriction of the projection $\pi$. Using Lemma~\ref{le} again, we conclude that
\[ \reg (\ia(V)) \leq \max \{ \reg(\KK[\mathbf{x}, \mathbf{y}, \mathbf{z}]), \reg(U) +1 \} \leq 2 .\]
As $\reg (\ia(V)) \geq 2$, we can conclude that $\reg (\ia(V))= 2$.
\end{proof}

As $\ia(V)$ is generated in degree two and has regularity two, we have that $\ia$ has a linear resolution. Thus, we can use its equivariant Hilbert series to write down its resolution. We have that
\[ H^e(\ia) = \sigma^3 - (3\sigma -2), \]
yielding the following resolution for $\ia$:
\begin{multline*}
\cdots \to  \cS \otimes (\cS_{(3,1)}^6 \oplus \cS_{(2,2)}^9 \oplus \cS_{(2,1,1)}^{21} \oplus \cS_{(1,1,1,1)}^{15}) \to \\ 
\to \cS \otimes ( \cS_{(3)}^2 \oplus \cS_{(2,1)}^{10} \oplus \cS_{(1,1,1)}^8) \to \cS \otimes (\cS_{(2)}^3 \oplus \cS_{(1,1)}^3) \to  \ia \to 0,
\end{multline*}
and for $\W(\ia)$:
\begin{multline*}
\cdots \to  \we \otimes (\cS_{(2,1,1)}^6 \oplus \cS_{(2,2)}^9 \oplus \cS_{(3,1)}^{21} \oplus \cS_{(4)}^{15}) \to \\ 
\to \we \otimes ( \cS_{(1,1,1)}^2 \oplus \cS_{(2,1)}^{10} \oplus \cS_{(3)}^8) \to \we \otimes (\cS_{(1,1)}^3 \oplus \cS_{(2)}^3) \to  \W(\ia) \to 0.
\end{multline*}

\section{Conclusion}

In this paper we have discussed an application of the transpose functor to a class of equivariant ideals over the symmetric algebra. The transpose functor technique can be applied to other classes of equivariant modules to translate homological properties of modules over the symmetric algebra to properties of modules over the exterior algebra. In the future we plan to study super vector spaces to extend these results to modules over super algebras.

Moreover, we have seen how the combinatorial information of the polymatroid associated to a subspace arrangement determines the equivariant resolution of the product ideal associated to the subspace arrangement. We hope that further study of the combinatorial properties of equivariant Hilbert series will provide more insights into the homological properties of these ideals.

Finally, we have mentioned in the introduction a connection to non-commutative invariant theory. In her thesis \cite{fg}, the author proves that the regularity bound on the intersection ideal of a subspace arrangement over the exterior algebra implies a degree bound on the minimal generating invariant skew polynomials of any finite group. In particular, an analog of Noether's Degree Bound \cite{em} is true for the exterior algebra. These results will appear in a forthcoming paper.


\begin{bibdiv}
\begin{biblist}
\bib{aa}{article}{
   author={Aramova, Annetta},
   author={Avramov, Luchezar L.},
   author={Herzog, J\"{u}rgen},
   title={Resolutions of monomial ideals and cohomology over exterior
   algebras},
   journal={Trans. Amer. Math. Soc.},
   volume={352},
   date={2000},
   number={2},
   pages={579--594},
   issn={0002-9947},
   review={\MR{1603874}},
   doi={10.1090/S0002-9947-99-02298-9},
}
\bib{b}{article}{
   author={Bj\"{o}rner, Anders},
   title={Subspace arrangements},
   conference={
      title={First European Congress of Mathematics, Vol. I},
      address={Paris},
      date={1992},
   },
   book={
      series={Progr. Math.},
      volume={119},
      publisher={Birkh\"{a}user, Basel},
   },
   date={1994},
   pages={321--370},
   review={\MR{1341828}},
}

\bib{ch}{article}{
   author={Conca, Aldo},
   author={Herzog, J\"urgen},
   title={Castelnuovo-Mumford regularity of products of ideals},
   journal={Collect. Math.},
   volume={54},
   date={2003},
   number={2},
   pages={137--152},
   issn={0010-0757},
   review={\MR{1995137}},
}
\bib{ds1}{article}{
   author={Derksen, Harm},
   author={Sidman, Jessica},
   title={A sharp bound for the Castelnuovo-Mumford regularity of subspace
   arrangements},
   journal={Adv. Math.},
   volume={172},
   date={2002},
   number={2},
   pages={151--157},
   issn={0001-8708},
   review={\MR{1942401}},
   doi={10.1016/S0001-8708(02)00019-1},
}
\bib{ds2}{article}{
   author={Derksen, Harm},
   author={Sidman, Jessica},
   title={Castelnuovo-Mumford regularity by approximation},
   journal={Adv. Math.},
   volume={188},
   date={2004},
   number={1},
   pages={104--123},
   issn={0001-8708},
   review={\MR{2084776}},
   doi={10.1016/j.aim.2003.10.001},
}
\bib{sym}{article}{
   author={Derksen, Harm},
   title={Symmetric and quasi-symmetric functions associated to
   polymatroids},
   journal={J. Algebraic Combin.},
   volume={30},
   date={2009},
   number={1},
   pages={43--86},
   issn={0925-9899},
   review={\MR{2519849}},
   doi={10.1007/s10801-008-0151-2},
}
\bib{ca}{book}{
   author={Eisenbud, David},
   title={Commutative Algebra},
   series={Graduate Texts in Mathematics},
   volume={150},
   note={With a view toward algebraic geometry},
   publisher={Springer-Verlag, New York},
   date={1995},
   pages={xvi+785},
   isbn={0-387-94268-8},
   isbn={0-387-94269-6},
   review={\MR{1322960}},
   doi={10.1007/978-1-4612-5350-1},
}
\bib{eg}{article}{
   author={Eisenbud, David},
   author={Goto, Shiro},
   title={Linear free resolutions and minimal multiplicity},
   journal={J. Algebra},
   volume={88},
   date={1984},
   number={1},
   pages={89--133},
   issn={0021-8693},
   review={\MR{741934}},
   doi={10.1016/0021-8693(84)90092-9},
}
\bib{e}{article}{
   author={Eisenbud, David},
   author={Popescu, Sorin},
   author={Yuzvinsky, Sergey},
   title={Hyperplane arrangement cohomology and monomials in the exterior
   algebra},
   journal={Trans. Amer. Math. Soc.},
   volume={355},
   date={2003},
   number={11},
   pages={4365--4383},
   issn={0002-9947},
   review={\MR{1986506}},
   doi={10.1090/S0002-9947-03-03292-6},
}

\bib{fg}{book}{
   author={Gandini, Francesca},
   title={Ideals of Subspace Arrangements},
   note={Thesis (Ph.D.)--University of Michigan},
   publisher={ProQuest LLC, Ann Arbor, MI},
   date={2019},
   note={(to appear in print)},
}

\bib{mac}{book}{
   author={Macdonald, I. G.},
   title={Symmetric Functions and Hall Polynomials},
   series={Oxford Classic Texts in the Physical Sciences},
   edition={2},
   note={With contribution by A. V. Zelevinsky and a foreword by Richard
   Stanley;
   Reprint of the 2008 paperback edition [ MR1354144]},
   publisher={The Clarendon Press, Oxford University Press, New York},
   date={2015},
   pages={xii+475},
   isbn={978-0-19-873912-8},
   review={\MR{3443860}},
}
\bib{mp}{article}{
   author={McCullough, Jason},
   author={Peeva, Irena},
   title={Counterexamples to the Eisenbud-Goto regularity conjecture},
   journal={J. Amer. Math. Soc.},
   volume={31},
   date={2018},
   number={2},
   pages={473--496},
   issn={0894-0347},
   review={\MR{3758150}},
   doi={10.1090/jams/891},
}
\bib{em}{article}{
   author={Noether, Emmy},
   title={Der Endlichkeitssatz der Invarianten endlicher Gruppen},
   language={German},
   journal={Math. Ann.},
   volume={77},
   date={1915},
   number={1},
   pages={89--92},
   issn={0025-5831},
   review={\MR{1511848}},
   doi={10.1007/BF01456821},
}
\bib{r}{article}{
   author={R\"{o}mer, Tim},
   title={Generalized Alexander duality and applications},
   journal={Osaka J. Math.},
   volume={38},
   date={2001},
   number={2},
   pages={469--485},
   issn={0030-6126},
   review={\MR{1833633}},
}
\bib{tca}{article}{
   author={Sam, Steven V.},
   author={Snowden, Andrew},
   title={Introduction to twisted commutative algebras},
  journal={ArXiv e-prints},
   date = {2012},
   eprint = {https://arxiv.org/abs/1209.5122},
}

\bib{sam1}{article}{
   author={Sam, Steven V.},
   author={Snowden, Andrew},
   title={GL-equivariant modules over polynomial rings in infinitely many
   variables},
   journal={Trans. Amer. Math. Soc.},
   volume={368},
   date={2016},
   number={2},
   pages={1097--1158},
   issn={0002-9947},
   review={\MR{3430359}},
   doi={10.1090/tran/6355},
}
\bib{sam2}{article}{
   author={Sam, Steven V.},
   author={Snowden, Andrew},
   title={GL-equivariant modules over polynomial rings in infinitely many
   variables. II},
   journal={Forum Math. Sigma},
   volume={7},
   date={2019},
   pages={e5, 71},
   issn={2050-5094},
   review={\MR{3922401}},
   doi={10.1017/fms.2018.27},
}

\bib{ss}{article}{
   author={Schenck, Hal},
   author={Sidman, Jessica},
   title={Commutative algebra of subspace and hyperplane arrangements},
   conference={
      title={Commutative algebra},
   },
   book={
      publisher={Springer, New York},
   },
   date={2013},
   pages={639--665},
   review={\MR{3051389}},
   doi={10.1007/978-1-4614-5292-821},
}

\bib{j}{book}{
   author={Sidman, Jessica S.},
   title={On the Castelnuovo-Mumford Regularity of Subspace Arrangements},
   note={Thesis (Ph.D.)--University of Michigan},
   publisher={ProQuest LLC, Ann Arbor, MI},
   date={2002},
   pages={45},
   isbn={978-0493-73664-8},
   review={\MR{2703588}},
}
\bib{sno}{article}{
   author={Snowden, Andrew},
   title={A remark on a conjecture of Derksen},
   journal={J. Commut. Algebra},
   volume={6},
   date={2014},
   number={1},
   pages={109--112},
   issn={1939-0807},
   review={\MR{3215564}},
   doi={10.1216/JCA-2014-6-1-109},
}
\end{biblist}
\end{bibdiv}

\end{document}